\definecolor{darkgreen}{rgb}{0,0.6,0}
\definecolor{darkblue}{rgb}{0,0,0.6}
\definecolor{orange}{RGB}{119,34,51}
\newtheorem{theorem}{Theorem}[section]
\numberwithin{equation}{section}
\newcounter{example}[section]
\numberwithin{example}{section}
\newtheorem{proposition}[theorem]{Proposition}
\newtheorem{lemma}[theorem]{Lemma}
\newtheorem{assumption}[theorem]{Assumption}
\newtheorem{corollary}[theorem]{Corollary}
\newtheorem{definition}[theorem]{Definition}
\newtheorem{exmp}[example]{Example}
\newtheorem{remark}[theorem]{Remark}
\newenvironment{example}{\begin{exmp}\rm}{\end{exmp}}
\def\cond{\, \big| \,}
\def\Ind{\operatorname{1}\hspace{-4.3pt}\operatorname{I}}
\def\Ph{\hat{P}}
\def\Ch{\hat{C}}
\def\ex{\mathrm{e}}
\declaretheoremstyle[headfont=\scshape]{normalhead}
\def\Pb{{\mathbf P}}
\def\eps{\epsilon}
\def\Ac{A_{0}}
\def\Ab{A_{\rd}}
\def\Cb{\cc{C}_{\rdb}}
\def\Ac{A_{0}}
\def\Ab{A_{\rdb}}
\def\Cb{\cc{C}_{\rdb}}
\def\N{\mathcal{N}}
\def\Ec{\E^{\circ}}
\def\F{\mathcal{F}}
\def\P{{\mathbb P}}
\def\E{{\mathbb E}}
\def\cond{\, \big| \,}
\def\Ind{\operatorname{1}\hspace{-4.3pt}\operatorname{I}}
\def\ex{\mathrm{e}}
\def\eps{\epsilon}
\def\Ac{A_{0}}
\def\Ab{A_{\rd}}
\def\Cb{\cc{C}_{\rdb}}
\def\Ac{A_{0}}
\def\Ab{A_{\rdb}}
\def\Cb{\cc{C}_{\rdb}}
\def\N{\mathcal{N}}
\def\Ec{\E^{\circ}}
\renewcommand{\(}{$\,}
\renewcommand{\)}{\,$}
\def\eqdef{\stackrel{\operatorname{def}}{=}}
\renewcommand{\bar}[1]{\overline{#1}}
\renewcommand{\hat}[1]{\widehat{#1}}
\renewcommand{\tilde}[1]{\widetilde{#1}}
\renewcommand{\Gamma}{\varGamma}
\renewcommand{\Pi}{\varPi}
\renewcommand{\Sigma}{\varSigma}
\renewcommand{\Delta}{\varDelta}
\renewcommand{\Lambda}{\varLambda}
\renewcommand{\Psi}{\varPsi}
\renewcommand{\Phi}{\varPhi}
\renewcommand{\Theta}{\varTheta}
\renewcommand{\Omega}{\varOmega}
\renewcommand{\Xi}{\varXi}
\renewcommand{\Upsilon}{\varUpsilon}
\def\Var{\operatorname{Var}}
\def\argmin{\operatornamewithlimits{argmin}}
\def\R{I\!\!R}
\def\kappa{\varkappa}
\def\hv{\bb{h}}
\def\d{{d}}
\def\Kb{\bar{\Kappa}}
\def\Kb{\bar{\Kappa}}
\def\Ind{{\bf 1}}
\def\Ch{\hat{C}}
\def\R{\mathbb{R}}
\def\Kh{\mathcal{K}}
\def\P{\mathbb{P}}
\def\R{\mathbb{R}}
\def\E{\mathbb{E}}
\def\Nc{\mathcal{N}}
\def\Ch{\hat{C}}
\def\Var{{\rm Var}}
\def\bigO{\mathcal{O}}
\def\Ah{\hat{A}}
\def\Ab{{\mathbf A}}
\def\hv{\hat\vartheta}
\def\hvo{\hv_{\text{oracle}}}
\def\eps{\epsilon}
\def\Ind{{\bf 1}}
\def\d{{\rm d}}
\def\e{{\rm e}}
\def\ex{{\rm e}}
\def\F{\mathcal{F}}
\def\Eb{{\mathbf E}}
\def\Ec{\mathcal{E}}
\def\Ac{\mathcal{A}}
\def\Bf{\mathfrak{B}}
\def\Kb{{\mathbf K}}
\def\Kh{\mathcal{ K}}
\def\Cb{{\mathbf C}}
\def\Sb{{\mathbf S}}
\def\cond{\, \big| \,}
\def\Lc{L}
\def\N{{\mathbb N}}
\def\wrap{\text{wrap}_\Ab}
\def\Var{\text{Var}}
\def\Ec{\mathcal{E}}
\numberwithin{equation}{section}
\definecolor{RED}{rgb}{1,0,0}\definecolor{BLUE}{rgb}{0,0,1}
\renewcommand{\d}{\ensuremath {\,\text{d}}}
\renewcommand{\eps}{\varepsilon}
\renewcommand{\phi}{\varphi}
\renewcommand{\subset}{\subseteq}
\renewcommand{\cdot}{{\scriptstyle \bullet} }
\renewcommand{\abs}[1]{\lvert #1 \rvert}
\renewcommand{\le}{\leqslant}
\renewcommand{\ge}{\geqslant}
\newsavebox{\@brx}
\newcommand{\llangle}[1][]{\savebox{\@brx}{\(\m@th{#1\langle}\)}%
  \mathopen{\copy\@brx\mkern2mu\kern-0.9\wd\@brx\usebox{\@brx}}}
\newcommand{\rrangle}[1][]{\savebox{\@brx}{\(\m@th{#1\rangle}\)}%
  \mathclose{\copy\@brx\mkern2mu\kern-0.9\wd\@brx\usebox{\@brx}}}
\begin{document}

%% Title, authors and addresses

%% use the tnoteref command within \title for footnotes;
%% use the tnotetext command for theassociated footnote;
%% use the fnref command within \author or \address for footnotes;
%% use the fntext command for theassociated footnote;
%% use the corref command within \author for corresponding author footnotes;
%% use the cortext command for theassociated footnote;
%% use the ead command for the email address,
%% and the form \ead[url] for the home page:

\title{The wrapping hull and a unified framework\\ for volume estimation}
%% \tnotetext[label1]{}
\author{ \parbox{7cm}{\centering Nicolai Baldin\footnote{Financial support by the European Research Council (ERC) Grant
No. 647812 is particularly acknowledged.
}\\\small Statslab, Department of Pure Mathematics and Mathematical Statistics\\ University of Cambridge\\n.baldin@statslab.cam.ac.uk}  }

%\hfill %\parbox{7.5cm}{\centering Markus Rei\ss\footnote{Financial support by the DFG via Research Unit FOR1735 {\it Structural Inference in Statistics} is gratefully acknowledged.} \\\small Institute of Mathematics\\ Humboldt-Universit{\"a}t zu Berlin\\mreiss@math.hu-berlin.de}}

%\title{Estimation of the volume of a convex body revisited}

%% use optional labels to link authors explicitly to addresses:
%% \author[label1,label2]{}
%% \address[label1]{}
%% \address[label2]{}

\maketitle

\begin{abstract}
This  paper develops a unified framework for estimating the volume of a set in $\mathbb{R}^d$ 
based on observations of points uniformly distributed over
the set. 
The framework applies to all classes of sets satisfying one simple axiom: 
a class is assumed to be intersection stable. No further hypotheses on the boundary of the set are imposed;
 in particular, the convex sets and the so-called weakly-convex sets 
are covered by the framework. 
The approach rests upon a homogeneous Poisson point process model. 
We introduce the so-called wrapping hull, a generalization of the convex hull,  and prove that it is a sufficient and complete statistic.
The proposed estimator of the volume is simply the volume of the wrapping hull scaled with an appropriate factor.
It is shown to be consistent for all classes of sets satisfying the axiom and 	 mimics an unbiased 
estimator with uniformly minimal variance. 
The construction and proofs hinge upon an interplay between probabilistic and geometric arguments.
The tractability of the framework is numerically confirmed in a variety of examples.
%In a variety of examples, we study numerically the mean squared error of  the estimator. 
% Finally, we describe how the framework can be used in computational geometry. 

%Interestingly enough, the framework was mentioned in a seminal 
%paper \cite{kendall1974foundations} by Professor David Kendall in the Statslab at the University of Cambridge, 
%but has never been studied properly.
%The author of this essay finds this connection quite exciting. 
\end{abstract}

\noindent Keywords:
 volume estimation, wrapping hull, Poisson point process, $r$-convex sets, UMVU, stopping set\\

%% PACS codes here, in the form: \PACS code \sep code

\noindent MSC code: 60G55, 62G05, 62M30\\

%% MSC codes here, in the form: \MSC code \sep code
%% or \MSC[2008] code \sep code (2000 is the default)

%% \linenumbers

%% main text

\section{Introduction}
The problem of estimating the support of a density has received a large amount of attention in the statistical literature since the
1980s
partly because of vast areas of applications in image analysis, signal processing and econometrics. The fundamental results in 
this area were obtained in \cite{devroye1980detection, KorTsy1993, korostelev1993minimax, korostelev1994asymptotic, cuevas1997, CholaquidisFraiman}. Furthermore, see \cite{tsybakov1997nonparametric,walther1997, rigollet2009optimal,mason2009, Cholaquidis17} for a more general problem of estimating the level sets of a density. 
In particular, 
\cite{KorTsy1993} established the minimax optimal rates for estimating the support of a density having a H{\"o}lder-continuous boundary in the Hausdorff and 
symmetric difference metrics and constructed 
an estimator which attains the optimal rates. The case of convex support estimation was first studied in \cite{korostelev1994asymptotic,korostelev1993minimax}, where it was shown 
that the convex hull \(\Ch\) of the sample points, which is the maximum likelihood estimator, is rate-optimal for estimating the support set \(C\) in the Hausdorff and 
symmetric difference metrics.
%Interestingly  in both cases the volume of an optimal estimator of a set is  not  an optimal  
%estimator of the volume of a set. 

The volume of a set is one of its most basic characteristics. Surprisingly, as it was shown in \cite{KorTsy1993, korostelev1993minimax},
the volume of a rate-optimal estimator of the set is not necessarily a rate-optimal  
estimator of the volume. The first fully rate-optimal estimators of the volume of a convex support with smooth boundary and a support 
with H{\"o}lder-continuous boundary  were 
constructed by \cite{gayraud1997} based on three-fold sample splitting. A more efficient and flexible estimator
of the volume of a convex set with no assumption on the boundary was recently proposed in \cite{BaldinReiss16}.
In fact the proposed estimator is simple to compute and non-asymptotically efficient. 
The problem of calculating the volume of a convex set has also attracted mathematicians working in 
computer science and computational geometry, see \cite{dyer1991random,KanLovSim1997, lovasz2006simulated}. 
The setting is slightly different: an experimenter uses a sampling algorithm that generates points over the space 
that either fall inside or outside a set (this information is obtained via queries to the oracle).
We refer to \cite{vempala2010recent} for a recent survey of the existing fast randomised algorithms for calculating the volume of a convex set. 
To the best of our knowledge, no efficient estimators 
of the volume of a more general than convex class of sets have been rigorously studied.

%The uniform model of a fixed number of  points drawn uniformly over a convex set \(C\) have been extensively studied in stochastic geometry.
%The focus of study is rather on understanding the distributional characteristics of key functionals like the volume of \(\Ch\), the number of vertices of \(\Ch\) and 
%the distance between \(\Ch\) and \(C\). The main references here are \cite{barany1988convex, reitzner2003random, reitzner2005central, vu2005sharp, pardon2011}.
%A closely related model is the Poisson point process (PPP) model in which the total number of sample points is assumed to be Poisson-distributed. Using Poissonisation and de-Poissonisation techniques, this model exhibits asymptotic properties like the uniform model, see e.g. the references above. 
%However the geometric properties of the PPP model are much more fecund for conducting statistical inference, see
%\cite{ReissSelk14,BaldinReiss16},
%where the techniques from the Poisson point processes theory were successfully employed for estimation of linear functionals 
%in a one-sided regression model and estimation of the volume of convex set.

\subsection{Main contribution and the structure}

In the present paper, we combine techniques from statistics and stochastic geometry to build a general framework for estimating
the volume of a set. 
We focus on the Poisson point process (PPP) observation model with intensity $\lambda>0$ on a set $A$. 
We thus observe
\begin{EQA}[c]
	X_1, ..., X_N \overset{i.i.d.}{\sim} U(A), \, \quad
	N \sim \text{Poiss}(\lambda\abs{A}),
\end{EQA}
where \((X_n), N\) are independent and \(|A|\) denotes the volume or the Lebesgue measure of the set \(A\). The set \(A\) is meant to belong to a class \(\Ab\) 
satisfying one simple assumption: \emph{the class is assumed to be intersection stable}, 
see Section~\ref{PPPTheory} for a concise definition.
The classes of sets covered by the assumption hence include:
\begin{itemize}
	\item convex sets;
	\item weakly-convex sets;
	\item star-shaped sets with a H{\"o}lder-continuous boundary;
	\item concentric sets;
	\item polytopes with fixed directions of outer unit normal vectors;
	\item compact sets.
\end{itemize}
%Let us briefly mention here that weakly-convex sets correspond to convex
In Section~\ref{PPPTheory},  we introduce
the so-called \emph{wrapping hull} \(\Ah\), which can informally be described as the minimal set from the class that 
contains the data points \(X_1,...,X_N\).
It is then used in Section~\ref{OracleCaseSection} to construct the so-called oracle estimator for
the volume of a set belonging to one of the aforementioned classes when the intensity \(\lambda\) of the process  is known.
The oracle estimator is shown to be uniformly
of minimum variance among unbiased estimators (UMVU). 
Section~\ref{DataDriveEstimation} is devoted to the estimation of the intensity and derives 
a fully data-driven estimator of the volume,
\begin{EQA}[c]
	\label{FinEst0}
	\hat\vartheta = \frac{N + 1}{N_\circ + 1} |\Ah|\,,
\end{EQA}
where \(N_\circ\) is the number of data points lying in the interior of the wrapping hull. Figure~\ref{FigureIntro} illustrates an example in which a naive estimator \(|\Ah|\) significantly underestimates the true volume \(|A|\)  even in the case when the class of sets is known whereas the estimator \(	\hat\vartheta\) produces a rather striking performance, see Section~\ref{SimulationsSection} for a more detailed  numerical study\footnote{The simulations were implemented using the R package ``spatstat'' by \cite{spatstat}.}.
\begin{figure}[tp]
	\centering
	\includegraphics[width=\linewidth]{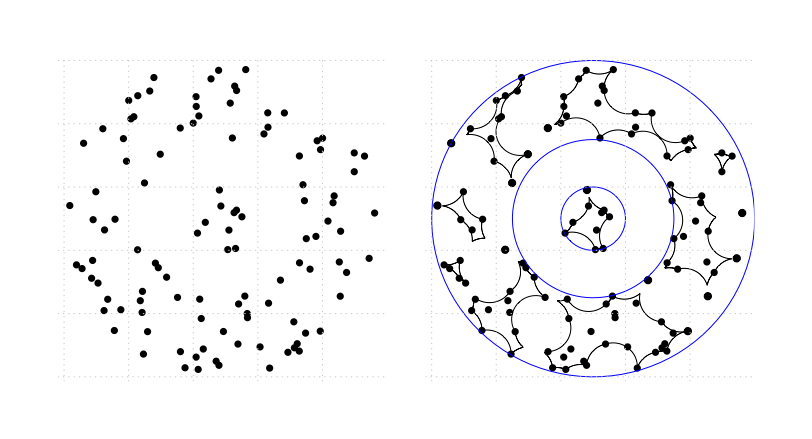}
	\caption[]{On the left:  observations over an $r$-convex set \(A\), the annulus \(B(0.5,0.5)\setminus B(0.5,0.25)\) with the ball inside \( B(0.5,0.1)\). On the right: the $r$-convex hull \(\Ah\) (black) and the true set \(A\) (blue). The volume of the wrapping hull here is \( |\Ah| = 0.307\), the true volume is \(|A| = 0.620\) and our estimator yields \(	\hat\vartheta = 0.578\).}
	\label{FigureIntro}
\end{figure} 
The mean squared risk of the estimator is shown to mimic the mean squared risk of the oracle estimator. 
%It is exactly the fecund Poisson point process properties that make estimation of the volume in such large classes of sets feasible and fairly straightforward. 
Although the main object of analysis is the PPP model, the key results 
transfer to the so-called uniform model, cf. Section~\ref{UnifModelVolEst}, using ``Poissonisation''.
%uniform model is used in the proofs of some of the statements. 
%We then use ``Poissonisation''  to derive analogous statements for the PPP model. 
%As the wrapping hull generalizes the convex hull, estimation of the volume of sets satisfying the assumption naturally inherits the ideas
%of estimation 
%of the volume of convex sets. 
Section~\ref{ClassesOfBodies} further establishes the rates 
of convergence of the oracle estimator and the estimator \(\hv\) in \eqref{FinEst0} for the considered classes of sets satisfying the assumption. 
Theorem~\ref{EfronsProp} states a generalized Efron's inequality  for the wrapping hull, cf. \cite{efron1965convex}, which reduces the analysis of the mean squared error of the 
estimator \(\hv\) to the distributional characteristics of the missing volume \(|A \setminus \Ah|\), a uniform lower bound on its expectation and a uniform
deviation inequality. Interestingly, a uniform lower bound on the expectation of the missing volume has not even been established
for the class of convex sets. 
We therefore establish the rates of convergence only for a relatively simple class of polytopes with fixed directions of outer unit normal vectors in Section~\ref{PolFixeDirSec}. A more 
general question is beyond the scope of the present paper and left to future research. 
In volume estimation of weakly-convex sets in Section~\ref{RConnSect} there is a further peculiar question of adaptation 
to a smoothing parameter. 
We suggest an adaptation procedure inspired by Lepski's method, cf. \cite{lepskii1992asymptotically}, and study it numerically in Section~\ref{SimulationsSection}.
Our 
numerical results in Section~\ref{SimulationsSection}, mainly devoted to volume estimation for the weakly-convex sets, in particular, 
demonstrate that overestimating the smoothing parameter may have a significant cost for volume estimation.
Some of the technical lemmata are deferred to the Appendix. 
%The main challenge it tackles is to ascertain 
Finally, we encounter and state a variety of new open questions in stochastic geometry, which we barely 
begin to nibble at the edges. Interestingly enough, the framework was mentioned in a seminal paper \cite{kendall1974foundations} by David Kendall in the Statslab 
at the University of Cambridge, but has never been fully explored. 

\subsubsection{A simple one-dimensional example}
Let  \(X_1, ...,X_n\) be a sample of i.i.d. points drawn from the uniform distribution \(U(a,b)\), and 
let \(X_{(1)},...,X_{(n)}\) denote the order statistics, so that 
\(X_{(1)} < ... < X_{(n)}\).
It holds by symmetry that  the expected length of the interval \((X_{(1)}, X_{(n)})\) satisfies
\begin{EQA}[c]
	\label{EIAX}
	\E [|X_{(n)} - X_{(1)} |] =\frac{(n- 1)}{(n+1)} (b-a)\,.
\end{EQA}
%see also Figure~\ref{UnifExp2} for an illustration. 
%Let us stick to the experiment above, i.e. we observe \(n\) points \(X_1,...,X_n\) drawn uniformly and independently of 
%each other over the interval \(AB\), see 
%Figure~\ref{UnifExp}.
%In statistics, the location of the points \(a\) and \(b\) is assumed to be unknown 
%and only the points \(X_1,...,X_n\) drawn uniformly  over the interval are observed. 
An objective of statistical inference is to estimate the length of the interval, when  the location of the points \(a\) and \(b\) is assumed to be unknown.
%Note that if we knew the length of the interval \((a,b)\) then intuitively we could estimate 
%the location of the points \(a\) and \(b\) with a better precision.
%\begin{figure}[tp]
%\centering
%\begin{tikzpicture}
%\draw[-][draw=pink, very thick] (0,0) -- (7,0);
%\draw [fill=cyan] (7/8,0)  circle [radius=0.1];
%\node at (0.65,-0.39) {$\E[X_{(1)}]$};
%\draw [fill=cyan] (14/8,0)  circle [radius=0.1];
%\node at (15/8,-0.39) {$\E[X_{(2)}]$};
%\draw [fill=cyan] (21/8,0)  circle [radius=0.1];
%\draw [fill=cyan] (28/8,0)  circle [radius=0.1];
%\draw [fill=cyan] (35/8,0)  circle [radius=0.1];
%\draw [fill=cyan] (49/8,0)  circle [radius=0.1];
%\draw [fill=cyan] (42/8,0)  circle [radius=0.1];
%\draw [thick] (0,-.1) node at (0,0.39)  {$a$} -- (0,0.1);
%\draw [thick] (7,-.1) node at (7,0.39) {$b$} -- (7,0.1);
%\node at (49/8,-0.39) {$\E[X_{(n)}]$};
%\end{tikzpicture}
%\caption{The expected location of the points \(X_{(1)},...,X_{(n)} \).}
%\label{UnifExp2}
%\end{figure}
A naive estimator, %\(|b - a|\) of the interval,
\begin{EQA}[c]
	\hat{l}_{naive} := X_{(n)} - X_{(1)}\,,
\end{EQA} clearly 
underestimates  the  length. A more attractive idea is to somehow dilate the interval \((X_{(1)}, X_{(n)})\) 
and take the length of the dilated interval as an estimator. 
There are at least two viable dilations: 1) add and subtract some fixed vectors 
from the end points \(X_{(n)}\) and \(X_{(1)}\) (additive dilation) and 2) dilate the interval \((X_{(1)},X_{(n)})\) 
from its centre \((X_{(n)} + X_{(1)}) / 2\) with some scaling factor (multiplicative dilation). 
In the one-dimensional case, both dilations are equivalent.
It follows from \eqref{EIAX} that a reasonable additive dilation factor is \(2(X_{(n)} - X_{(1)})/(n-1)\) which yields  an estimator for the volume,
\begin{EQA}[c]
	\label{onedim}
	\hat{l}_{1} := \frac{(n+1)}{(n-1)}(X_{(n)} - X_{(1)})\,.
\end{EQA} 

%Another dilation is related to the following property of the uniform law. We may assume that the number of points 
%lying in a subinterval \(I_{A_1B_1}\) is proportional to the length of that interval with some \emph{intensity} factor \(\lambda > 0\). In particular, 
%we may assume \(n  = \lambda \vartheta_{AB}\). Let us fix the points \(X_{(n)}\) and 
%\(X_{(1)}\). Then the distribution of the remaining \(n - 2\) points is uniform over the interval \(I_{X_{(1)} X_{(n)}}\). In statistics, this 
%means that the pair of points \((X_{(1)}, X_{(n)})\) is a \emph{sufficient statistic} for the uniform distribution over \(I_{AB}\). 
%Thus, the number of points lying in the interval \(I_{X_{(1)} X_{(n)}}\), which is \(n - 2\), is proportional to the length \(|I_{X_{(1)} X_{(n)}}|\).
%Using these ideas we can derive another estimator 
%\begin{EQA}[c]
%\hat\vartheta_{1} = \frac{(n+1)}{(n-1)}(X_{(n)} - X_{(1)})\,.
%\label{onedim}
%\end{EQA}
This estimator is not only \emph{unbiased},  \(\E[\hat{l}_{1}] = b - a\), but also, as we shall see in 
Section~\ref{OracleCaseSection} and Section~\ref{DataDriveEstimation}, 
is minimax optimal.
We also refer to \cite{moore1984} for a comprehensive literature review of set estimation in the one-dimensional case.

\subsubsection{Estimation of the volume of a convex set in high dimensions}
The one-dimensional model is useful to grasp the main ideas of volume estimation, yet it is not 
widely used in real applications. The two-dimensional model already covers several important applications in image analysis and signal processing. 
Here, we observe the points \(X_1,...,X_n\) drawn uniformly over a set \(C \subset \R^2\) and 
an objective is to recover the volume  \(V_C\) of the set and the set itself. Let us assume that 
\(C\) belongs to the class of convex sets. 
The wrapping hull is then simply the convex hull \(\Ch\) of the points \(\Ch = \text{conv}(X_1,...,X_n)\). %\footnote{A set \(C\subset \R^2\)  is said to be convex if, for all \(x\) and \(y\) in \(C\) and all \(t \in [0, 1]\), the point \((1 - t)x + ty\) also belongs to \(C\). 
%In other words, every point on the line segment connecting \(x\) and \(y\) is in \(C\).}
% On the one hand,
%this assumption is quite restrictive, but, on the other, it allows to start developing a nice rigorous theory and still covers many interesting phenomena.
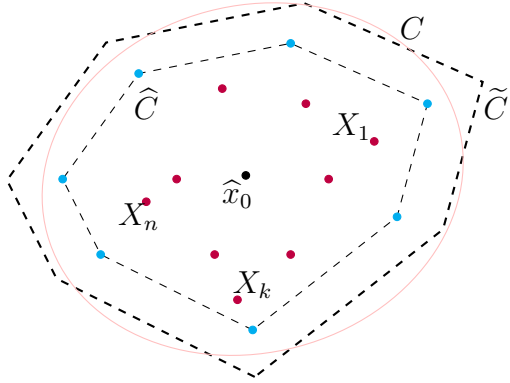
\begin{figure}[tp]
	\centering
	\begin{tikzpicture}
	\draw [dashed] (0,0) -- (-2,1) -- (-2.5,2) -- (-1.5,3.4) -- (0.5,3.8) -- (2.3, 3) -- (1.9, 1.5) -- (0,0);
	
	\draw [dashed,thick] (0.02734564, -0.6225691) -- (-2.58033532, 0.6812713) -- (-3.23225556,1.9851118) -- (-1.92841508,3.8104885) -- 
	(0.67926588, 4.3320247) -- (3.02617875,3.2889523) -- (2.50464256, 1.3331916) --  (0.02734564, -0.6225691);
	%\draw [black, fill = gray!20!white] (0,1.5) -- (4,3.5) -- (4,4) -- (0,4) -- (0,1.5);
	%\draw [black, fill = gray!40!white]  (0.1,3) to [out=87,in=150] (1,3.7) -- (0.85,3.15) -- (0.1,3);
	\draw [rotate around={15:(0,2)},pink] (0,2) ellipse (2.8 and 2.3);
	\draw [cyan, fill] (0,0) circle [radius=0.05];
	\draw [cyan, fill] (-2,1) circle [radius=0.05];
	\draw [cyan, fill] (-2.5,2) circle [radius=0.05];
	\draw [cyan, fill] (-1.5,3.4) circle [radius=0.05];
	\draw [cyan, fill] (0.5,3.8) circle [radius=0.05];
	\draw [cyan, fill] (2.3, 3) circle [radius=0.05];
	\draw [cyan, fill] (1.9, 1.5) circle [radius=0.05];
	\draw [black, fill] (-0.09, 2.049) circle [radius=0.05];
	\node at (-0.2, 1.8) {\(\hat x_0\)};
	
	\draw [purple, fill] (-1,2) circle [radius=0.05];
	\draw [purple, fill] (-0.5, 1) circle [radius=0.05];
	\draw [purple, fill] (-0.2, 0.4) circle [radius=0.05];
	\draw [purple, fill] (-1.4,1.7) circle [radius=0.05];
	\draw [purple, fill] (-0.4,3.2) circle [radius=0.05];
	\draw [purple, fill] (0.5,1) circle [radius=0.05];
	\draw [purple, fill] (0.7,3) circle [radius=0.05];
	\draw [purple, fill] (1,2) circle [radius=0.05];
	\draw [purple, fill] (1.6,2.5) circle [radius=0.05];

	%\node at (-2.5,4.2) {\((\mathbf{E}, \mathcal{E})\)};
	\node at (2.1,4) {\(C\)};
	\node at (-1.4,3) {\(\hat{C}\)};
	\node at (3.2,3) {\(\tilde{C}\)};
	\node at (1.3,2.7) {\(X_1\)};
	\node at (0,0.6) {\(X_k\)};
	\node at (-1.5,1.5) {\(X_n\)};
	%\draw [darkblue, fill] (3.3, 3.5) circle [radius=0.05];
	%\draw [green, fill] (3.3, 1.7) circle [radius=0.05];

	%\draw [->, thick] (2,2) -- (1.63,2.7);
	%\draw [black, dashed] (0,1.25) -- (4,3.25);
	%\node[text width=3cm] at (5,3) {\small \(N_{\Ch}\) the number of points {\color{darkblue}on} the convex hull };
	%\node[text width=3cm] at (5,1) {\small \(N_\circ\) the number of points {\color{darkgreen} in the interior} the convex hull };
	\end{tikzpicture}
	\caption{The points \(X_1,...,X_n\) drawn uniformly over a  set \(C\), the convex hull of the points \(\Ch = \text{conv}(X_1,...,X_n)\) and the dilated hull estimator 
		\(\tilde{C}\).}
	\label{HB}
\end{figure}
%We first focus on the estimation of the volume of a set \(C\). % As in the one-dimensional case, we would like to start with a simple estimator. 
%\emph{What do you think would be an analogue 
%of the simple estimator  \(\hat{L}_{naive} = X_{(n)} - X_{(1)}\) in the two-dimensional case?} 
Analogously to the one-dimensional case, it is natural to consider the volume \(|\hat{C}|\) 
of the convex hull %\footnote{The convex hull of a set of points \(x_1,...,x_n\)  is the set \(\Ch = \{\sum_{i} \lambda_i x_i | \sum_{i}\lambda_i = 1, \lambda_i \ge 0 \}\).}
as a baseline estimator for the volume \(V_C\) of the set \(C\). It is quite intuitive that this estimator performs quite poorly
because it always underestimates the true volume and it should therefore be dilated as in the one-dimensional case. 
Section~\ref{OracleCaseSection} and Section~\ref{DataDriveEstimation} show that an optimal estimator has the following form
\begin{EQA}[c]
	\hat{V}_{opt} = \frac{n + 1}{n_\circ + 1} |\hat{C}|\,, 
	\label{hvopt}
\end{EQA}
where \(n_\circ\) is the number of purple points in  Figure~\ref{HB} that lie in the interior of the convex hull \(\hat{C}\). 
Note that \(\hat{V}_{opt}\) is the volume of the ``dilated'' hull \(\tilde{C}\), the set obtained by 
dilating the convex hull with the same factor from the centre of gravity \(\hat x_0\) of the convex hull:
\begin{EQA}[c]
	\tilde{C} 	= \Big\{\hat x_0+\Bigl(\frac{n+1}{n_\circ+1}\Bigr)^{1/d}  (x-\hat x_0)\,\Big|\,x\in\Ch\Big\} \,,
\end{EQA}
which can in fact  be used to estimate the set \(C\) itself.
Similarly, the same estimators for the volume and the set itself can be used in higher dimensions. 

The uniform model of a fixed number of  points drawn uniformly over a convex set \(C\) has been extensively studied in stochastic geometry.
The focus of study is rather on understanding the distributional characteristics of key functionals like the volume of \(\Ch\), the number of vertices of \(\Ch\) and 
the distance between \(\Ch\) and \(C\). The main references here are \cite{barany1988convex, reitzner2003random, reitzner2005central, vu2005sharp, pardon2011}.
The Poisson point process (PPP) model studied in the present paper 
is closely related to the uniform model. Using Poissonisation and de-Poissonisation techniques, this model exhibits asymptotic properties like the uniform model, see e.g. the references above and Section~\ref{UnifModelVolEst}. 
However the geometric properties of the PPP model are much more fecund for conducting statistical inference, see
\cite{ReissSelk14,BaldinReiss16},
where the techniques from the Poisson point processes theory were successfully employed for estimation of linear functionals 
in a one-sided regression model and estimation of the volume of a convex set.
%The uniform model of a fixed number of  points drawn uniformly over a convex set \(C\) have been extensively studied in stochastic geometry.
%The focus of study is rather on understanding the distributional characteristics of key functionals like the volume of \(\Ch\), the number of vertices of \(\Ch\) and 
%the distance between \(\Ch\) and \(C\). The main references here are \cite{barany1988convex, reitzner2003random, reitzner2005central, vu2005sharp, pardon2011}.
%A closely related model is the Poisson point process (PPP) model in which the total number of sample points is assumed to be Poisson-distributed. Using Poissonisation and de-Poissonisation techniques, this model exhibits asymptotic properties like the uniform model, see e.g. the references above. 
%However the geometric properties of the PPP model are much more fecund for conducting statistical inference, see e.g.
%\cite{ReissSelk14}.

%Do you see that the estimator \eqref{onedim} is a special case of the estimator \eqref{hvopt}?
\subsubsection{How fast can we estimate ${\pi}$?} 
There are quite a few ways how one can calculate the number \(\pi\), see \cite{arndt2001pi}. 
We here discuss one interesting way based on the Monte Carlo simulations of independent uniformly distributed random variables. 
It is a toy illustrative application of volume estimation in sampling theory.
Let us draw the points \(X_1,...,X_N\) from the uniform distribution over the square \([0,1] \times [0,1]\)
and count the number of points \(n\) which fall inside the circle centred at the origin of radius \(1\). 
Let \(\hat{\pi}:= n / N\) denote the ratio of the points inside the circle to the total number of points. 
It approximately equals \(\pi/4\), because it is an unbiased estimator:
\begin{EQA}[c]
	\E[\hat{\pi}] = \frac{1}{N} \E[n] = \frac{1}{N} \E\big[\sum_{i = 1}^{N} \Ind(X_i \in C)\big] = \frac{\pi}{4}\,,
\end{EQA}
and therefore its mean squared risk is governed  by the variance:
\begin{EQA}[c]
	\E\big[(\hat{\pi} - \pi)^2\big] = \Var(\hat{\pi}) = \frac{1}{N^2} \Var\big(\sum_{i = 1}^{N} \Ind(X_i \in C)\big) = \frac{1}{N}\frac{\pi}{4} \big(1 - \frac{\pi}{4}\big)\,.
\end{EQA}
It turns out \(\hat{\pi}\) is even a maximum likelihood estimator. Surprisingly,  we are able to 
estimate \(\pi\) with a much faster rate based on the data points in this experiment. Following \eqref{hvopt},
we define our properly scaled estimator as
\begin{EQA}[c]
	\hat{\pi}_{opt} = 4 \frac{n + 1}{n_\circ + 1} |\hat{C}|\,, 
\end{EQA}
where \(n_\circ\) is the number of points lying inside the convex hull \(\hat{C}\) of the points lying inside the circle.
Theorem~\ref{UpBoundOracle} and Theorem~\ref{new_estimator_risk_wrap} to follow state that the
rate of convergence of the mean squared risk of the estimator \( \hat{\pi}_{opt} \) satisfies
\(\E\big[(\hat{\pi}_{opt} - \pi)^2\big] = \bigO(N^{-5/3})\), see Figure~\ref{pi} for a numerical comparison of the two estimators. 
Note that both estimators can well be computed in polynomial time.
\begin{figure}[tp]
	\includegraphics[width=\linewidth]{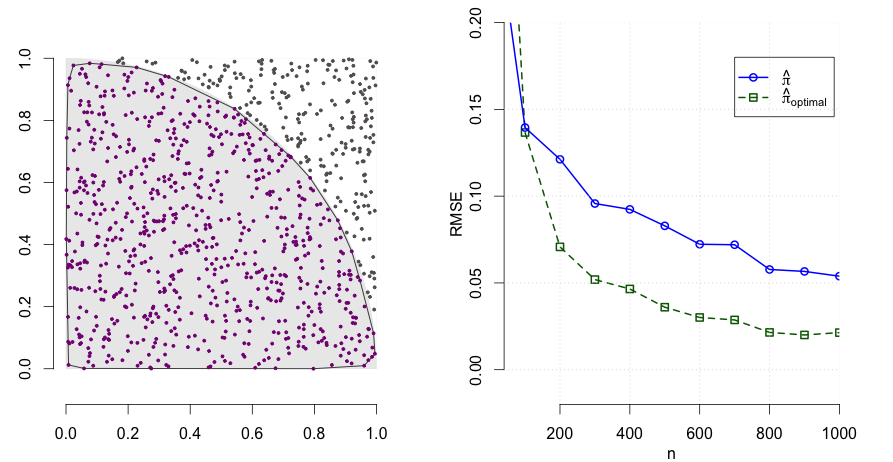}
	\caption[]{On the left: a sample of \(n = 500\) points drawn uniformly over  the square \([0,1] \times [0,1]\).
		On the right: Monte Carlo root mean squared error (RMSE) estimates for the studied estimators for \(\pi\) based on 200 Monte Carlo 
		simulations in each case.}
	\protect\label{pi}
\end{figure}

\subsection{Relationship to the work on volume estimation of a convex set} 

Some of the theoretical results in the present paper are underpinned by the results for the convex case, cf.
Section~\ref{OracleCaseSection} and Theorem~\ref{new_estimator_risk_wrap} in
Section~\ref{DataDriveEstimation}, and the corresponding results  in \cite{BaldinReiss16}. 
In fact, 
a key observation igniting the development of the present framework is that estimation of the volume of convex sets 
can in fact solely rely upon the property of convex sets being stable under taking intersections rather than convexity itself.
This striking observation appears to have a substantial value for volume estimation in a variety of scenarios far beyond convexity.
Volume estimation for some of the classes covered by the framework, in particular, the weakly-convex sets, has been long seen as notoriously
difficult with standard geometric arguments, see  the references in Section~\ref{ClassesOfBodies}. 

Not violating the flow of the paper, we shall therefore omit some of the proofs 
of the statements that are deduced from the proofs 
of the corresponding statements for the convex case.
%Unless the reader is familiar with the techniques of deriving the main estimators in the convex case in \cite{BaldinReiss16}, 
%the following exposition of Section~\ref{OracleCaseSection} and the beginning of  Section~\ref{DataDriveEstimation} contains more concise and accurate statements.
The proof of the result that the wrapping hull is a complete statistic in Theorem~\ref{SuffComplThm} is slightly simplified compared to the proof of the theorem that the convex hull is a complete statistic in  \cite{BaldinReiss16} and hinges  upon 
a measure-theoretic result in stochastic geometry. 
In contrast to the special case of convex sets, the present paper further argues 
that the designed estimator \(\hat\vartheta \) is in fact \emph{adaptive} as its rate explicitly depends on
the rate of convergence of the missing volume \(|A \setminus \Ah|\).
This result rests upon Efron's inequality proved in Section~\ref{EfronWrapSect}.  
Section~\ref{UnifModelVolEst} explicitly states that the same estimator is minimax optimal in the uniform model. Section~\ref{ClassesOfBodies} appears to convey the most noticeable value for applications as it provides efficient data-driven estimators and clearly outlines the steps of deriving explicit rates of convergence for specific classes of intersection stable sets.

%%%%%%%%%%%%%%%%%%%%%%%%%%%%%%%%%%%%%%%%%

\section{Poisson point process theory and the wrapping hull} 
\label{PPPTheory}
In this section, we slightly digress on Poisson point processes, introduce the main notions and collect recently developed mathematical tools.
Let \((\Omega, \F, \P)\) be a probability space, fix a convex compact set \(\Eb \) in \(\R^d\) 
and equip it with the Borel \(\sigma\)-algebra \(\Ec\) with respect to the Euclidean metric \(\rho\). Without loss of generality one may assume that \(\Eb = [0,1]^d\). 
By a \emph{point process} \(\Nc\) on \(\Eb\) we mean an integer-valued random measure, or a kernel, 
from the probability space \((\Omega, \F, \P)\) into \((\Eb, \Ec)\). Thus, \(\Nc\)
is a mapping from \(\Omega \times \Ec\)  into \(\{0,1,...\}\) such that \(\Nc(\omega, \cdot)\) is 
an integer-valued measure for fixed \(\omega \in \Omega\) and \(\Nc(\cdot, B)\) is  an integer-valued random variable 
for fixed \(B \in \Ec\). For convenience, we write \(\Nc(B) = \Nc(\cdot, B)\). 

Let \(\Kb\) be the set of all compact subsets of \(\Eb\) equipped with its
Borel \(\sigma\)-algebra \(\Bf_{\Kb}\)
with respect to the Hausdorff-metric $\rho_H$ defined for two non-empty compact sets \(A\) and \(B\) by 
\begin{EQA}[c]
	\rho_H(A,B) = \max \big( \sup_{x \in B} \rho(x, A), \sup_{x \in A} \rho(x, B)\big)\,.
\end{EQA}
It is known, see Theorem C.5 in \cite{molchanov2006}, that the  Borel \(\sigma\)-algebra \(\Bf_{\Kb}\) coincides with 
the \(\sigma\)-algebra \(\sigma([B]_{\Kb}, B \in \Kb)\) with \([B]_{\Kb} = \{A \in \Kb : A \subseteq B \}\).  Moreover, the space \((\Kb, \rho_H) \) is Polish. 

\vspace{10pt}
Let \(\Ab \subset \Kb\) be a family of compact subsets of \(\Eb\) fulfilling  the following assumption	
\begin{assumption}
	\label{Assumption}
	\(\Ab\) is closed under taking arbitrary intersections and \(\varnothing, \Eb \in \Ab\).
\end{assumption}
\vspace{10pt}

%For technical reasons, we enrich the family \(\Ab\) with \(\varnothing\) and \(\Eb\). 
Then the metric subspace \((\Ab, \rho_H)\) has 
the induced Borel \(\sigma\)-algebra \(\Bf_\Ab = \Ab \cap \Bf_\Kb = \{\Ab \cap K : K \in \Bf_\Kb \}\), which thus 
coincides with the \(\sigma\)-algebra
\(\Ac = \sigma([B], B \in \Ab)\) where \([B] = \{A \in \Ab: A \subseteq B \}\). 
It turns out there is a fascinating connection between the families of sets satisfying Assumption~\ref{Assumption} and the \emph{trapping systems} introduced in the groundbreaking  work of \cite{kendall1974foundations} on the theory of random sets, see also Section 7.2 in \cite{molchanov2006}.

We call a point process \(\Nc : \Omega \to M(\Eb)\) a Poisson point process (PPP) of intensity \(\lambda > 0 \) on \(A \in {\mathbf A}\), if 
\begin{itemize}
	\item for any \(B \in  \mathcal{E}\), we have \(\Nc(B) \sim \text{Poiss}\bigl(\lambda |A\cap B|\bigr)\), where $\abs{A\cap B}$ denotes the Lebesgue measure of $A\cap B$;
	\item for all mutually disjoint sets \(B_1,..., B_n \in    \mathcal{E}\), the
	random variables \newline \(\Nc(B_1),...,\Nc(B_n) \) are independent.
\end{itemize}
For statistical inference, we assume the Poisson point process to be defined on a set of non zero  Lebesgue measure, i.e. \(|A| >\delta\) for 
\(\delta >0\).
%A more constructive geometric characterisation of the PPP  \(\Nc\)
%is $\Nc = \sum_{i = 1}^{N} \delta_{X_i}$
%for \(N \sim \text{Poiss}(\lambda\abs{A})\) and  i.i.d. random variables \((X_i)\),
%independent of \(N\) and distributed uniformly  \(\P(X_i \in B) = |A \cap B| / |A|\),
%so that  \(\Nc(B)  =  \sum_{i = 1}^{N} {\bf 1}( X_i \in B )\)  for any \(B \in  \mathcal{E}\).

%Let \(\Kb\) be the set of all compact subsets of \(\Eb\) equipped with its
% Borel \(\sigma\)-algebra \(\Bf_{\Kb}\)
%with respect to the Hausdorff-metric $d_H$. 
Interestingly, one can view \( (\Nc(K),K \in \Kb)\) as a set-indexed stochastic process. It has no direct order 
and its natural filtration is defined by 
\begin{EQA}[c]
	\F_K \eqdef \sigma(\{\Nc(U); U \subseteq K,  U \in \Kb \})
\end{EQA}
for any \( K \in \Kb\). The properties of  the filtration \((\F_K, K \in \Kb)\) are well studied, cf 
\cite{zuyev1999stopping}. By construction, the restriction \(\Nc_K = \Nc(\cdotp \cap K)\) of the point process \(\Nc\) onto
\(K \in \mathbf{K}\) is \(\F_K\)-measurable (in fact, \(\F_K = \sigma(\{\Nc_K(U); U \in \mathbf{K}\})\)). Moreover, it can be easily seen that \(\Nc_K\) is a Poisson point process in \(\mathbf{M}\), cf. the
Restriction Theorem in \cite{kingman1992poisson}. 
Furthermore, we say that a set-indexed, \((\F_K)\)-adapted integrable process
\((X_K, K \in \Kb)\) is a martingale if  \(\E[X_B| \F_A] = X_A\) holds for any \(A, B \in \Kb\) with \(A \subseteq B\).
By the independence of increments, the process 
\begin{EQA}[c]
	M_K \eqdef \Nc(K) - \lambda|K|,\quad K\in\Kb\,,
\end{EQA}
is clearly a martingale with respect to its natural filtration \((\F_K,K\in\Kb)\).

A random compact set \(\Kh\) is a measurable mapping \(\Kh:  (\mathbf{M},\mathcal{M}) \to (\Kb,\Bf_\Kb)\).
%Note that \cite{zuyev1999stopping} defines a random compact set as a measurable mapping from \( (\mathbf{M},\mathcal{M})\) to
%\((\Kb,\sigma_{\Kb})\) where \(\sigma_{\Kb}\) is the so-called \emph{Effros} \(\sigma\)-algebra generated by the sets \(\{ F \in \Kb: F \cap K \neq \varnothing \}\), \(K \in \Kb\). Thanks to
%Thm. 2.7  in \cite{molchanov2006}, the Effros \(\sigma\)-algebra  \(\sigma_{\Kb}\)  induced on the family of compact sets \(\Kb\) coincides with
%the Borel \(\sigma\)-algebra \(\Bf_\Kb\), and we prefer to stick to the first definition of a random compact set for convenience.
A random compact set \(\Kh\) is called an \(\F_K\)-stopping set if \(\{\Kh \subseteq K\} \in \F_K\)
for all \(K \in \Kb\). The sigma-algebra of \(\Kh\)-history is defined as
$\F_{\Kh} = \{A \in \mathfrak{F}: A \cap \{ \Kh \subseteq K \} \in \F_K\,\,\, \forall K \in \Kb \}$, where
\(\mathfrak{F} =   \sigma(\F_K; K \in \Kb)\).
We introduce the \emph{wrapping hull} of the PPP points on a set \(A \in \Ab\), which is served as a set-estimator of \(A\).

\vspace{10pt}
\begin{definition}
	\label{WrapDef}
	The \(\Ab\)-\emph{wrapping hull} (or simply the wrapping hull) of the PPP points is a mapping \(\Ah: M \to \Ab\) defined as
	\begin{EQA}[c]
		\Ah \eqdef \wrap\{X_1,...,X_N\} \eqdef \bigcap \{A \in \Ab:  X_i \in A, \forall i= 1,...,N \}\,.
	\end{EQA}
\end{definition}
%Note that an \(\Ab\)-\emph{wrapping hull} may actually consist of disjoint sets.
\vspace{10pt}

For a set $A\subset {\mathbf E}$ let $A^c$ denote its complement.

\begin{lemma}
	\label{LemmaStoppingSet}
	The set $\hat\Kh\eqdef\overline{\Ah^c}$, the closure of the complement of the wrapping hull, is an $(\F_K)$-stopping set.
\end{lemma}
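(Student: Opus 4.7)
My plan is to verify the defining property $\{\hat\Kh\subseteq K\}\in\F_K$ for an arbitrary fixed $K\in\Kb$. The first step is a topological simplification: since $\Ah\in\Ab\subset\Kb$ is closed and $K$ is closed in $\Eb$, one has $\overline{\Ah^c}\subseteq K$ if and only if $\Ah^c\subseteq K$, i.e.\ $\Ah\supseteq \Eb\setminus K$. Hence it suffices to show
\[
\{\Ah\supseteq \Eb\setminus K\}\in\F_K.
\]

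The central device is the \emph{local} wrapping hull
\[
\Ah_K \;:=\; \wrap\{X_i : X_i\in K\},
\]
obtained by retaining only the observations lying inside $K$. By Assumption~\ref{Assumption} the class $\Ab$ is closed under intersection, so $\Ah_K\in\Ab$; moreover $\Ah_K$ is a measurable function of the restriction $\Nc|_K$ and is therefore $\F_K$-measurable. The proof will be complete once I establish the key identification
\[
\{\Ah\supseteq \Eb\setminus K\} \;=\; \{\Ah_K\supseteq \Eb\setminus K\},
\]
since the right-hand event belongs to $\F_K$ by the preceding remark.

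The inclusion $\supseteq$ is the easy half and is where the intersection-stability axiom is decisive. Suppose $\Ah_K\supseteq \Eb\setminus K$. Then every observation $X_i$ lies in $\Ah_K$: those with $X_i\in K$ by the very definition of $\Ah_K$, and those with $X_i\in\Eb\setminus K$ by the hypothesis. Thus $\Ah_K\in\Ab$ is a set containing all of $S=\{X_1,\ldots,X_N\}$, so by minimality of the wrap $\Ah\subseteq \Ah_K$; combined with the monotonicity $\Ah_K\subseteq \Ah$, this forces $\Ah=\Ah_K$, and in particular $\Ah\supseteq\Eb\setminus K$.

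The main obstacle is the reverse inclusion $\subseteq$: one must show that whenever $\Ah\supseteq \Eb\setminus K$, the local wrap $\Ah_K$ already contains $\Eb\setminus K$. My plan here is to exploit idempotency of $\wrap$ as a closure operator on $\Ab$: under the hypothesis, the observations $X_i\in\Eb\setminus K$ lie in a set that is ``already in the hull'', and one argues that they are therefore redundant in the construction of $\Ah$, causing $\Ah$ to collapse onto $\Ah_K$ and hence $\Ah_K$ to contain $\Eb\setminus K$. This is the technically delicate step and is where the interplay between the intersection-stability of $\Ab$ and the idempotency of $\wrap$ must be used carefully; once it is in place, the identity above yields $\{\hat\Kh\subseteq K\}\in\F_K$ and the lemma follows.
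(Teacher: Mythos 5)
Your route is the same one the paper points to (it defers to Lemma~2.2 of \cite{BaldinReiss16} for the convex hull): reduce $\{\hat\Kh\subseteq K\}$ to $\{\Eb\setminus K\subseteq\Ah\}$ and identify this with the $\F_K$-measurable event $\{\Eb\setminus K\subseteq\Ah_K\}$, where $\Ah_K=\wrap\{X_i: X_i\in K\}$. Your topological reduction and the inclusion ``$\supseteq$'' are correct. But the inclusion ``$\subseteq$'' is not a deferrable technicality: it is where the entire content of the lemma sits, and the tools you propose (intersection-stability plus idempotency of $\wrap$) cannot close it. What the convex-hull proof actually uses is that on the event $\{\Eb\setminus K\subseteq\Ah\}$ one has $\partial\Ah\subseteq\overline{\Ah^c}\subseteq K$, so every observation outside $K$ lies in $\Ah^{\circ}$, \emph{and} that points in the interior of the hull are redundant, i.e.\ $\Ah=\wrap\{X_i: X_i\in\partial\Ah\}$. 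For convex sets this is Minkowski's theorem (a polytope is the convex hull of its extreme points, which are data points on its boundary); it is a structural property of the class, not a formal consequence of the closure-operator calculus.

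To see that the step cannot be purely formal, take $\Eb=[0,1]^2$, $A_0=[0,1/2]\times[0,1]$ and $\Ab=\{\emptyset,A_0,\Eb\}$, which satisfies Assumption~\ref{Assumption}. With $K=\overline{A_0^c}=[1/2,1]\times[0,1]$ one has $\Eb\setminus K=A_0^{\circ}$ and $\{\Eb\setminus K\subseteq\Ah\}=\{N\ge 1\}$, whereas on the positive-probability event that all observations fall in $A_0^{\circ}$ one gets $\Ah_K=\wrap(\emptyset)=\emptyset$, so your key identity fails there (and indeed $\{N\ge1\}$ is not $\F_K$-measurable for this class, so no argument can rescue it). The moral is that the ``delicate step'' you flag must import a boundary-generation property of the specific class under consideration --- exactly what the cited convex-hull proof does via extreme points, and what holds for the paper's concrete examples ($r$-convex sets, polytopes with fixed normal directions, etc.) --- rather than relying on Assumption~\ref{Assumption} alone. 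As written, your proof therefore has a genuine gap at its central step. (A minor additional point: the $\F_K$-measurability of the map $\Nc|_K\mapsto\Ah_K$ into $(\Ab,\Bf_\Ab)$ also deserves justification, though that part is routine.)
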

The proof of Lemma~\ref{LemmaStoppingSet} essentially repeats the steps of the proof of an analogous statement for the convex hull of the PPP points given in Lemma 2.2 in \cite{BaldinReiss16}. A striking observation is the following corollary, which rests upon 
the optional sampling theorem for set-indexed martingales, cf. \cite{zuyev1999stopping}. 
\begin{corollary}
	\label{EfronsIdenPoisson}
	The number of points \(N_\partial\) lying on the wrapping hull \(\Ah\) and
	the missing volume \(|A \setminus \Ah|\) satisfy the relation
	\begin{EQA}[c]
		\label{efronsIdent2}
		\E[N_\partial] = \lambda\, \E[ |A\setminus \Ah|]\,.
	\end{EQA}
\end{corollary}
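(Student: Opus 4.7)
The plan is to apply the optional sampling theorem for set-indexed martingales of Zuyev (1999) to the compensated Poisson process $M_K = \mathcal{N}(K) - \lambda |A \cap K|$, using the stopping set $\hat K = \overline{\hat A^c}$ supplied by Lemma~\ref{LemmaStoppingSet}. Since $(M_K, K \in \mathbf{K})$ is a martingale with respect to $(\mathcal{F}_K)$ and $\hat K$ is bounded (contained in the compact $\mathbf{E}$, so $|\hat K| \le 1$ and $\mathcal{N}(\hat K) \le \mathcal{N}(\mathbf{E})$ which has all moments), optional sampling is applicable and yields $\mathbb{E}[M_{\hat K}] = \mathbb{E}[M_\varnothing] = 0$, i.e.
\begin{equation*}
\mathbb{E}[\mathcal{N}(\hat K)] = \lambda\, \mathbb{E}[|A \cap \hat K|].
\end{equation*}

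Next, I identify the two quantities on the right and left concretely. Because $\hat A \in \mathbf{A}$ is the intersection of closed sets it is itself closed, so $\hat A^c$ (complement in $\mathbf{E}$) is open and $\hat K = \overline{\hat A^c} = \mathbf{E} \setminus \operatorname{int}(\hat A)$. By the very definition of the wrapping hull in Definition~\ref{WrapDef}, every observation $X_i$ lies in $\hat A$; those in $\operatorname{int}(\hat A)$ contribute $0$ to $\mathcal{N}(\hat K)$ and those on $\partial \hat A$ contribute $1$. Therefore $\mathcal{N}(\hat K) = N_\partial$ almost surely. For the Lebesgue term, $A \cap \hat K = A \setminus \operatorname{int}(\hat A)$, which differs from $A \setminus \hat A$ only by $A \cap \partial \hat A$; since $\hat A$ is a compact set in one of the classes considered (e.g.\ convex, $r$-convex, polytopes with fixed normals) its boundary has Lebesgue measure zero, so $|A \cap \hat K| = |A \setminus \hat A|$ almost surely. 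Substituting these two identifications into the optional-sampling equation gives exactly \eqref{efronsIdent2}.

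The main obstacle is the rigorous application of optional sampling for set-indexed martingales: one must verify that $\hat K$ is a genuine $(\mathcal{F}_K)$-stopping set (provided by Lemma~\ref{LemmaStoppingSet}) and that the needed integrability/boundedness conditions in Zuyev's theorem are met. The latter is essentially automatic here because $\hat K \subseteq \mathbf{E}$, whence $\lambda|A \cap \hat K| \le \lambda|\mathbf{E}|$ and $\mathcal{N}(\hat K) \le \mathcal{N}(\mathbf{E}) \sim \operatorname{Poiss}(\lambda |A|)$, so $M_{\hat K}$ is integrable. A secondary subtlety is confirming that $\partial \hat A$ is Lebesgue-null for all classes in the framework; for the listed classes this follows from standard boundary regularity (e.g.\ convex and $r$-convex sets have rectifiable boundary, and finite polytopes trivially so), which is the implicit reason the Corollary as stated only depends on the intersection-stability axiom together with mild regularity of the admissible sets.
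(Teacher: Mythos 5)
Your proof is correct and follows essentially the same route the paper indicates for this corollary: the optional sampling theorem for set-indexed martingales of Zuyev (1999), applied to the compensated counting process at the stopping set $\hat{K}=\overline{\hat{A}^{c}}$ supplied by Lemma~\ref{LemmaStoppingSet}, followed by the identifications $\mathcal{N}(\hat{K})=N_{\partial}$ and $|A\cap\hat{K}|=|A\setminus\hat{A}|$. You are in fact slightly more careful than the text, writing the compensator correctly as $\lambda|A\cap K|$ and flagging that $|\partial\hat{A}|=0$ is the regularity needed to pass from $|A\setminus\operatorname{int}(\hat{A})|$ to $|A\setminus\hat{A}|$.
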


We shall define the \emph{likelihood function} for the PPP model. Note that we can evaluate the probability
\[
\P_A\bigl(\Ah \in  B\bigr)
= \sum_{k = 0}^{\infty} \frac{\e^{-\lambda \abs{A}} \lambda^k}{k!}
\int_{A^k} {\bf 1}(\wrap\{x_1, ..., x_k\} \in B ) \d (x_1,..., x_k)
\]
for \(B \in \Bf_{\Ab}\). Usually, we only write the subscript $A$ or sometimes $(A,\lambda)$ when different probability distributions are considered simultaneously.
The likelihood function \(\Lc(A, \Nc) = \frac{\d \P_{A,\lambda}}{\d \P_{\mathbf{E},\lambda_0}}\) for \(A \in \Ab\) and $\lambda,\lambda_0>0$
is then given by
\begin{EQA}[c]
	\Lc(A, \Nc) = \frac{\d \P_{A,\lambda}}{\d \P_{\mathbf{E},\lambda_0}}(\Nc) 
	%&=& \frac{d \P_{\mathbf{E},\lambda}}{d \P_{\mathbf{E},\lambda_0}}(X_1,...,X_N) \frac{d \P_{C,\lambda}}{d \P_{\mathbf E,\lambda}}(X_1,...,X_N)\\
	= \ex^{\lambda_0|{\mathbf E}|-\lambda|A|}(\lambda/\lambda_0)^N {\bf 1}(\Ah \subseteq A)\,,
	\label{fPPeb}
\end{EQA}
cf. Thm. 1.3 in \cite{Kutoyants1998}. For the last line, we have used that a point set is in $A$ if and only if its wrapping hull 
\(\Ah = \wrap\{X_1,...,X_N\}\) is contained in $A$.

The following theorem is an essential ingredient for deriving our estimators. 
The statement
of the theorem is quite intuitive and already used in \cite{privault2012invariance}. Its proof is similar to the proof of Theorem 3.1 in \cite{BaldinReiss16}.
\begin{theorem}
	\label{LemMeasCond}
	Set $\hat\Kh\eqdef\overline{\Ah^c}$.
	The number \(N_{\partial}\) of points on the boundary of the wrapping hull \(\Ah\) is measurable with respect to the sigma-algebra of
	\(\hat\Kh\)-history \(\F_{\hat\Kh}\). The number of points in the interior of the wrapping hull \(N_\circ\) is, conditionally on \(  \F_{\hat\Kh}\), Poisson-distributed:
	\begin{equation}
	\label{NccCs}
	N_\circ \cond \F_{\hat\Kh} \sim \text{Poiss}(\lambda_\circ)\text{ with }\lambda_\circ \eqdef \lambda | \Ah |.
	\end{equation}
	In addition, we have \(\F_{\hat\Kh} = \sigma(\Ah)\), where
	the latter is the sigma-algebra \( \sigma(\{\Ah \subseteq B, B \in \Ac\})\) completed with the null sets in  \(\mathfrak{F}\).
\end{theorem}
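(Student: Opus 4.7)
The plan is to deploy the set-indexed strong Markov property of the PPP at the stopping set \(\hat\Kh = \overline{\Ah^c}\), whose stopping-set property was already established in Lemma~\ref{LemmaStoppingSet}. All three assertions will then follow in turn.

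First I identify the boundary count. Since \(A \in \Ab\) itself contains \(X_1,\ldots,X_N\), the wrapping hull satisfies \(\Ah \subseteq A\), and every atom of \(\Nc\) lies in \(\Ah = \operatorname{int}(\Ah) \cup \partial \Ah\). Because \(\Ah \cap \hat\Kh = \partial \Ah\) and \(\hat\Kh^c = \operatorname{int}(\Ah)\), the \(\P\)-a.s.\ decomposition
\begin{equation*}
N = N_\partial + N_\circ, \qquad N_\partial = \Nc(\hat\Kh), \qquad N_\circ = \Nc(\hat\Kh^c),
\end{equation*}
holds. The general property that the restriction \(\Nc(\,\cdot\, \cap \hat\Kh)\) of an \((\F_K)\)-adapted set-indexed point process to a stopping set is \(\F_{\hat\Kh}\)-measurable (cf. Zuyev, 1999) then yields the \(\F_{\hat\Kh}\)-measurability of \(\Nc(\hat\Kh) = N_\partial\).

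For the conditional law of \(N_\circ\), I invoke the set-indexed strong Markov property (optional sampling) of the PPP: conditionally on \(\F_{\hat\Kh}\), the restriction \(\Nc|_{\hat\Kh^c}\) is a PPP with intensity \(\lambda \mathbf{1}_A\). Since \(\hat\Kh^c = \operatorname{int}(\Ah) \subseteq A\), this intensity reduces to \(\lambda\) on \(\hat\Kh^c\), and taking the total mass gives
\begin{equation*}
N_\circ \,\bigl|\, \F_{\hat\Kh} \;\sim\; \Poiss\bigl(\lambda \,|\operatorname{int}(\Ah)|\bigr) \;=\; \Poiss(\lambda|\Ah|),
\end{equation*}
because \(\partial\Ah\) has Lebesgue measure zero.

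Finally I address the identity \(\F_{\hat\Kh} = \sigma(\Ah)\). The inclusion \(\sigma(\Ah) \subseteq \F_{\hat\Kh}\) is immediate, since \(\hat\Kh = \overline{\Ah^c}\) is a Borel function of \(\Ah\) and \(\sigma(\hat\Kh)\) is always contained in the stopping-set sigma-algebra. The reverse inclusion is the main obstacle. The heuristic is that the information available ``up to \(\hat\Kh\)'' consists of two pieces: the value of \(\hat\Kh\) itself, and the atom configuration of \(\Nc\) inside \(\hat\Kh\). The latter is concentrated on \(\partial\Ah\) (no atom lies in \(\hat A^c\)), and by the very definition of \(\Ah\) as the intersection of all \(\Ab\)-sets containing the atoms, this boundary configuration is, \(\P\)-almost surely, a deterministic measurable function of \(\Ah\) — the simple PPP has no multiple points and no atoms on \(\partial\Ah\) that are not forced by the intersection characterisation. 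Formalising this via a monotone class argument on the generators \(\{\Nc(B)\in E\}\cap\{\hat\Kh\subseteq K\}\) of \(\F_{\hat\Kh}\), decomposing \(\Nc(B) = \Nc(B\cap\hat\Kh) + \Nc(B\cap\hat\Kh^c)\) so that the first summand is \(\sigma(\Ah)\)-measurable by the preceding remark while the second is handled through the Markov step, and completing with the \(\mathfrak{F}\)-null sets, yields the reverse inclusion and completes the proof.
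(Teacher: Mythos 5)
Your handling of the first two assertions is correct and is exactly the paper's route: a.s.\ one has \(N_\partial=\Nc(\hat\Kh)\) and \(N_\circ=\Nc(\hat\Kh^c)\) since no atoms fall in \(\Ah^c\), the measurability of the count over the stopping set \(\hat\Kh\) gives the first claim, and the set-indexed strong Markov property at \(\hat\Kh\) (which the paper obtains from the martingale/Watanabe-type characterisation of the set-indexed Poisson process together with optional sampling) gives \eqref{NccCs}, using \(\hat\Kh^c=\operatorname{int}(\Ah)\subseteq A\) and \(|\partial\Ah|=0\).

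The gap is in your argument for \(\F_{\hat\Kh}\subseteq\sigma(\Ah)\). The events \(\{\Nc(B)\in E\}\cap\{\hat\Kh\subseteq K\}\) do \emph{not} generate \(\F_{\hat\Kh}\): taking \(B=K=\Eb\) produces \(\{N\in E\}\), which is not \(\F_{\hat\Kh}\)-measurable (otherwise \(N\), hence \(N_\circ\), would be \(\F_{\hat\Kh}\)-measurable and the conditional law in \eqref{NccCs} would be degenerate). Moreover, the summand \(\Nc(B\cap\hat\Kh^c)\) in your decomposition is genuinely not \(\sigma(\Ah)\)-measurable — the interior configuration is not a function of the hull — so "handling it through the Markov step" can only show that such events are conditionally independent of \(\F_{\hat\Kh}\) given \(\Ah\); it cannot place any event depending on it inside \(\sigma(\Ah)\), which is what the inclusion requires. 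The ingredient you are missing, and which the paper's proof (following the convex case) actually uses, is the identification of \(\F_{\hat\Kh}\) with the sigma-algebra generated by the \emph{stopped} process, \(\sigma(\{\Nc(\hat\Kh\cap K),\,K\in\Kb\})\) — a set-indexed analogue of the classical stopped-filtration theorem (Thm.~6, Ch.~1 in Shiryaev's optimal stopping monograph). Once \(\F_{\hat\Kh}\) is reduced to counts of the form \(\Nc(\hat\Kh\cap K)=\Nc((\partial\Ah)\cap K)\), your (correct) observation that, modulo null sets, these count the "vertices" of \(\Ah\) in \(K\) and are therefore determined by \(\Ah\) finishes the proof; but that observation must be applied to the stopped process only, not to arbitrary counts \(\Nc(B)\).
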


We shall further use the following short notation:  \(N =\Nc(\Eb)\) denotes the total number of points, \(N_\circ = \Nc({\Ah}^\circ)\) the number of points in the interior
of the wrapping hull \(\Ah\) and \( N_{\partial} = \Nc(\partial\Ah)=\Nc(\partial\hat\Kh) \) the number of points on the boundary of the wrapping hull.
For asymptotic bounds we write \(f(x) = O(g(x))\) or \(f(x) \lesssim g(x)\) if \(f(x)\) is bounded by a constant multiple of \(g(x)\)  and
\(f(x) \thicksim g(x)\) if \(f(x) \lesssim g(x)\)  as well as \(g(x) \lesssim f(x)\).

\section{Oracle case: known intensity $\lambda$}\label{OracleCaseSection}
In the case when \(\lambda\) is known one can just estimate the volume \(|A|\) by \(N/\lambda\), which is an unbiased estimator,
whose mean squared risk is given by 
\begin{EQA}[c]
	\label{RiskNaiveEst}
	\E[(N / \lambda - |A|)^2] = \Var(N/\lambda) = |A| / \lambda \,,
\end{EQA}
thus implying \(\bigO (\lambda^{-1/2})\)-rate of convergence. This rate can be improved. 
As we shall see in Theorem~\ref{SuffComplThm}, the wrapping hull is a complete and sufficient statistic thus allowing 
to construct the unique best unbiased estimator of the volume in virtue of the Lehmann-Scheff\'e theorem.
In view of Theorem~\ref{LemMeasCond} we thus derive our oracle estimator 
\begin{EQA}[c]
	\hvo=
	\E \Big[\frac{N}{\lambda} | \F_{\hat\Kh} \Big]  =  \E \Big[\frac{N_\circ + N_\partial}{\lambda} | \F_{\hat\Kh} \Big] = \frac{N_\partial}{\lambda} + |\Ah|
	\label{OracleEst}
\end{EQA}
The following result is fundamental in characterising the rate of convergence of the risk of the oracle estimator. % Its statement and the proof are inspired by Theorem~3.2 in \cite{BaldinReiss16} in view of Theorem~\ref{LemMeasCond}.
\begin{theorem}
	\label{UpBoundOracle} For known intensity \(\lambda > 0\), the oracle estimator \(\hvo\) is unbiased and of minimal variance among all unbiased estimators (UMVU)
	in the PPP model with parameter class \(\Ab\). It satisfies
	\begin{EQA}[c]
		\label{ORACLEEst}
		\E \big[(\hvo- |A|)^2\big] = \Var(\hvo) = \frac{\E[|A\setminus \Ah|]}{\lambda}\,.
	\end{EQA}
\end{theorem}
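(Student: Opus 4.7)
The plan is to establish the three claims of the theorem---unbiasedness, minimum variance (UMVU), and the explicit variance formula---by combining the factorization of the likelihood \eqref{fPPeb}, the conditional Poisson description in Theorem~\ref{LemMeasCond}, and the martingale identity of Corollary~\ref{EfronsIdenPoisson}.

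First I would verify unbiasedness directly. Taking expectations in \eqref{OracleEst} gives $\E[\hvo] = \E[N_\partial]/\lambda + \E[|\hat A|]$, and substituting $\E[N_\partial] = \lambda\,\E[|A\setminus\hat A|]$ from Corollary~\ref{EfronsIdenPoisson} yields $\E[\hvo] = \E[|A\setminus\hat A|]+\E[|\hat A|] = |A|$. This already shows $\hvo$ is unbiased, so the mean squared error coincides with its variance.

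Second, for the variance I would use the law of total variance with the conditioning sigma-algebra $\F_{\hat\Kh}$. Since $\hvo = \E[N/\lambda\mid \F_{\hat\Kh}]$ by construction, one has
\begin{equation*}
\Var(N/\lambda) = \E\!\left[\Var(N/\lambda\mid \F_{\hat\Kh})\right] + \Var(\hvo).
\end{equation*}
On the left, $N\sim\Poisson(\lambda|A|)$ gives $\Var(N/\lambda) = |A|/\lambda$. On the right, $N = N_\circ + N_\partial$ with $N_\partial$ being $\F_{\hat\Kh}$-measurable (by Theorem~\ref{LemMeasCond}) and $N_\circ\mid\F_{\hat\Kh}\sim\Poisson(\lambda|\hat A|)$, so $\Var(N/\lambda\mid \F_{\hat\Kh}) = |\hat A|/\lambda$. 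Subtracting yields $\Var(\hvo) = (|A|-\E[|\hat A|])/\lambda = \E[|A\setminus\hat A|]/\lambda$, which is the claimed identity.

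Third, for the UMVU property I would invoke the Lehmann--Scheff\'e theorem. Sufficiency of $\hat A$ (equivalently of $\F_{\hat\Kh}$) in the submodel with $\lambda$ fixed follows from the factorization \eqref{fPPeb}: as a function of $A\in\Ab$ the likelihood depends on the data only through $\Ind(\hat A\subseteq A)$. Completeness of $\hat A$ is the statement of Theorem~\ref{SuffComplThm}, which the excerpt asserts and attributes to a measure-theoretic argument on $(\Ab,\Bf_\Ab)$. Granting these two properties, the unbiased estimator $\hvo = \E[N/\lambda\mid \F_{\hat\Kh}]$ is a function of a complete sufficient statistic, hence by Lehmann--Scheff\'e it is the unique UMVU estimator of $|A|$.

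The only step that is not essentially mechanical is the appeal to Theorem~\ref{SuffComplThm} for completeness; everything else reduces to the conditional Poisson description from Theorem~\ref{LemMeasCond} and the optional-sampling identity in Corollary~\ref{EfronsIdenPoisson}, both of which are already available.
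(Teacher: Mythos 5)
Your proposal is correct and follows essentially the same route as the paper: the variance identity via the law of total variance conditional on \(\F_{\hat\Kh}\) (using that \(N_\partial\) is \(\F_{\hat\Kh}\)-measurable and \(N_\circ\cond\F_{\hat\Kh}\sim\text{Poiss}(\lambda|\Ah|)\)), and the UMVU property via Lehmann--Scheff\'e combined with Theorem~\ref{SuffComplThm}. The only cosmetic difference is that you derive unbiasedness from Corollary~\ref{EfronsIdenPoisson} rather than directly from the tower property applied to \(\hvo=\E[N/\lambda\cond\F_{\hat\Kh}]\); the two are equivalent.
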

\begin{remark} The theorem  asserts that the rate of convergence of \(\hvo\) is in fact faster than \(\lambda^{-1/2}\) for all classes of sets \(\Ab\) satisfying  
	Assumption~\eqref{Assumption}.
\end{remark}

\begin{proof}
	By the tower property of conditional expectation, the estimator \(\hvo\) is unbiased, \(\E[\hvo] = |A|\). Using law of total variance, we derive 
	\begin{EQA}
		\Var(\hvo) = \Var\Big(\E \Big[\frac{N}{\lambda} | \F_{\hat\Kh} \Big]\Big)& =& \Var(\frac{N}{\lambda}) - \E \Big[\Var\Big(\frac{N}{\lambda}| \F_{\hat\Kh} \Big)\Big] \\
		&=& \frac{|A|}{\lambda} - \E \Big[\Var\Big(\frac{N_\circ + N_\partial}{\lambda}|\F_{\hat\Kh} \Big)\Big] = \frac{\E[|A\setminus \Ah|]}{\lambda}\,.
	\end{EQA}
	Theorem~\ref{SuffComplThm} below affirms that the wrapping hull \(\Ah\) is a 
	complete and sufficient statistic such that by the Lehmann-Scheff\'e theorem, the
	estimator \(\hvo\) has the UMVU property.
\end{proof}

\begin{theorem}
	\label{SuffComplThm} For known intensity \(\lambda > 0\), the wrapping hull  is a complete and sufficient statistic.
\end{theorem}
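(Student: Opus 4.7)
The plan is to establish sufficiency and completeness separately, both anchored in the likelihood representation~\eqref{fPPeb}.

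For sufficiency, the quickest route is the Fisher-Neyman factorization theorem. With $\lambda$ known, taking $\lambda_0=\lambda$ as the reference intensity in \eqref{fPPeb} collapses the density to
\[
\Lc(A,\Nc) = \ex^{\lambda|\Eb|-\lambda|A|}\,\Ind(\Ah\subseteq A),
\]
whose dependence on $\Nc$ is entirely through $\Ah$. Since $\Nc\mapsto\Ah$ is measurable by Lemma~\ref{LemmaStoppingSet} and Theorem~\ref{LemMeasCond}, sufficiency is immediate.

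For completeness, suppose $g:\Ab\to\R$ is bounded measurable with $\E_A[g(\Ah)]=0$ for every $A\in\Ab$. Expanding by the Poisson law of $\Nc$ and multiplying through by $\ex^{\lambda|A|}$ rewrites this as
\[
g(\varnothing) + \sum_{k=1}^\infty \frac{\lambda^k}{k!}\int_{A^k} g\bigl(\wrap\{x_1,\ldots,x_k\}\bigr)\,dx_1\cdots dx_k = 0 \qquad \text{for every } A\in\Ab.
\]
Plugging in $A=\varnothing\in\Ab$ (Assumption~\ref{Assumption}) yields $g(\varnothing)=0$. Since $\Ah=\wrap\{X_1,\ldots,X_N\}$ $\P_A$-a.s. with $X_i\in A$, it suffices to prove that $g(\wrap\{x_1,\ldots,x_k\})=0$ for Lebesgue-a.e.\ $(x_1,\ldots,x_k)$ and every $k\geq 1$; this then gives $g(\Ah)=0$ $\P_A$-a.s.\ and closes the argument.

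The main obstacle is isolating the individual $k$-th integrals from the mixed series above when only $A\in\Ab$ may be substituted. My plan is to work instead at the level of the pushforward distribution of $\Ah$. A direct computation, using that $\{\Ah\subseteq B\}=\{\Nc(A\setminus B)=0\}$ for $B\in\Ab$, yields the closed-form containment functional $\P_A(\Ah\subseteq B)=\ex^{-\lambda|A\setminus B|}$, and since the events $\{[B]:B\in\Ab\}$ form a $\pi$-system generating $\Ac$, the distribution $\P_A^{\Ah}$ is determined by these probabilities. I would then show, via a linear-independence argument on the exponential functions $\{A\mapsto\ex^{-\lambda|A\setminus B|}\}_{B\in\Ab}$ as $A$ ranges over $\Ab$ (with intersection stability of $\Ab$ providing the flexibility to separate these exponentials), that $\E_A[g(\Ah)]=0$ for all $A\in\Ab$ forces $g$ to vanish on the generating $\pi$-system, and a standard monotone-class extension then delivers $g=0$ on $\Ac$. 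This identifiability step is the measure-theoretic result from stochastic geometry referenced in the excerpt, and it bypasses the combinatorial analysis of the convex hull used in \cite{BaldinReiss16}, yielding the promised simplification.
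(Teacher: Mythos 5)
Your sufficiency argument is exactly the paper's (Neyman factorisation applied to \eqref{fPPeb} with $\lambda_0=\lambda$), and that part is fine. The completeness argument, however, has a genuine gap at its core step.

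The paper's proof of completeness runs entirely through the change-of-measure identity that follows from \eqref{fPPeb}: for every $A\in\Ab$,
\begin{EQA}[c]
	\E_A\bigl[T(\Ah)\bigr] \;=\; \ex^{\lambda\abs{{\mathbf E}\setminus A}}\,\E_{\mathbf E}\bigl[T(\Ah)\,{\bf 1}(\Ah\subseteq A)\bigr]\,,
\end{EQA}
so the hypothesis $\E_A[T(\Ah)]=0$ for all $A$ says precisely that the finite signed measure $E\mapsto \E_{\mathbf E}[T(\Ah)\,{\bf 1}(\Ah\in E)]$ vanishes on every bracket $[A]=\{C\in\Ab: C\subseteq A\}$. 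Splitting $T=T^{+}-T^{-}$ gives two finite positive measures agreeing on the $\pi$-system of brackets, hence (by the cited fact that the brackets generate $\Ac$) on all of $\Ac$, and evaluating them on $\{T>0\}$ and $\{T<0\}$ forces $\E_{\mathbf E}[T^{\pm}(\Ah)]=0$. You have all the ingredients for this — you wrote down the likelihood for the sufficiency step and you correctly identify the brackets as a generating $\pi$-system — but you never deploy the likelihood in the completeness argument. Instead you expand $\E_A[g(\Ah)]$ as a Poisson series, correctly observe that the $k$-th integrals cannot be isolated, and then pivot to a plan that does not close: (i) the containment functional $\P_A(\Ah\subseteq B)=\ex^{-\lambda\abs{A\setminus B}}$ determines the \emph{law} of $\Ah$ for each fixed $A$, which is an identifiability statement about $A$, not a completeness statement about the family $\{\P_A^{\Ah}\}_{A}$; (ii) linear independence of the functions $A\mapsto\ex^{-\lambda\abs{A\setminus B}}$ only tells you something about finite linear combinations of bracket indicators ${\bf 1}_{[B]}$, and a general measurable $g$ is not such a combination; (iii) there is no monotone-class theorem that upgrades ``$g$ integrates to zero against each bracket indicator'' to ``$g=0$'' at the level of the \emph{function} $g$ — the $\pi$-$\lambda$/uniqueness machinery applies to \emph{measures}, which is exactly why the change of measure is needed: it converts the family of conditions indexed by $A$ into the vanishing of a single signed measure on the generating $\pi$-system. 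As written, the step ``forces $g$ to vanish on the generating $\pi$-system'' is not a meaningful assertion about a function on $\Ab$, and the proof does not go through without replacing it by the signed-measure formulation above. (Two smaller points: completeness should be proved for integrable, not merely bounded, $T$ if you want Lehmann--Scheff\'e; and the substitution $A=\varnothing$ is outside the parameter space since the model assumes $\abs{A}>\delta>0$, though $g(\varnothing)=0$ is not actually needed once the measure argument is in place.)
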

The proof of Theorem~\ref{SuffComplThm} is deferred to the Appendix. 
As a result of Theorem~\ref{UpBoundOracle}, the performance of the estimator \(\hvo\) of the volume is reduced to the analysis of the performance of the wrapping hull 
estimator of the set itself, which clearly depends on the geometric properties of classes of sets satisfying 
Assumption~\ref{Assumption}.

The minimax lower bounds on the rate of convergence of the risk of estimating the volume of a  set \(A \in \Ab\)  are often easier to establish for concrete classes of sets using the so-called hypercube argument, cf.  \cite{gayraud1997}. Interestingly, the following general bound on the minimax optimal rate holds.
\begin{theorem}
	\label{MinimaxLowerBoundThm}
	The minimax optimal rate of estimating the volume of a set \(A \in \Ab\)  satisfies 
	\begin{EQA}[c]
		\label{eq:ddvvvbe}
		\lambda^{-2} \lesssim	\inf_{ \hat{\vartheta}_\lambda} \sup_{A \in \Ab}	\E \big[(\hat{\vartheta}_\lambda - |A|)^2\big] \lesssim \lambda^{-1}\,,
	\end{EQA}
	where the infimum  extends over all estimators \(\hat{\vartheta}_\lambda \) in the Poisson point process model with intensity \(\lambda\).
\end{theorem}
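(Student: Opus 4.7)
The plan is to treat the two bounds separately. The upper bound in \eqref{eq:ddvvvbe} is immediate: for the naive estimator $\hat\vartheta_\lambda = N/\lambda$, relation \eqref{RiskNaiveEst} gives $\E_A[(\hat\vartheta_\lambda - |A|)^2] = |A|/\lambda \le |\Eb|/\lambda = \lambda^{-1}$ using the normalization $\Eb = [0,1]^d$ from Section~\ref{PPPTheory}. Alternatively the oracle estimator \eqref{OracleEst} could be invoked via Theorem~\ref{UpBoundOracle}; either way there is nothing nontrivial.

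For the lower bound, I would apply Le Cam's two-point method. Fix a small constant $c>0$ and choose $A_0, A_1 \in \Ab$ with $A_0 \subsetneq A_1$ and $|A_1 \setminus A_0| = c/\lambda$; such a pair exists for $\lambda$ large in every class of interest from the introduction (a chain of two concentric balls of appropriate radii is automatically intersection-stable and suffices). The key calculation is the total variation distance between $\P_{A_0}$ and $\P_{A_1}$. Decomposing the PPP on $A_1$ into independent PPPs on $A_0$ and on $A_1\setminus A_0$, one sees that the two laws induce the same marginal on $A_0$ (a PPP of intensity $\lambda$), while on $A_1\setminus A_0$ the marginal is the Dirac mass at the empty configuration under $\P_{A_0}$ and a PPP of intensity $\lambda$ under $\P_{A_1}$. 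Hence
\[
\mathrm{TV}(\P_{A_0},\P_{A_1}) \;=\; \P_{A_1}\bigl(\Nc(A_1\setminus A_0)\ge 1\bigr) \;=\; 1 - e^{-\lambda|A_1\setminus A_0|} \;=\; 1 - e^{-c},
\]
uniformly in $\lambda$. Le Cam's inequality for squared-error loss then gives, for every estimator $\hat\vartheta_\lambda$,
\[
\max_{i\in\{0,1\}} \E_{A_i}\bigl[(\hat\vartheta_\lambda - |A_i|)^2\bigr] \;\ge\; \frac{(|A_1|-|A_0|)^2}{8}\bigl(1-\mathrm{TV}(\P_{A_0},\P_{A_1})\bigr) \;=\; \frac{c^2 e^{-c}}{8\lambda^2},
\]
and fixing, say, $c = 1$ produces the claimed $\lambda^{-2}$ bound.

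The main substantive point is the construction of the nested pair $A_0 \subsetneq A_1$ inside $\Ab$ with the prescribed volume gap $c/\lambda$; this is a mild non-degeneracy condition on $\Ab$ that is self-evident for all classes listed in the introduction. Beyond that, the argument only uses the PPP independence axiom together with standard Le Cam machinery, so no further geometric input from Assumption~\ref{Assumption} enters the proof.
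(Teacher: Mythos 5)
Your proof is correct, but it takes a genuinely different route from the paper on the lower bound. The paper's own argument is only sketched: it obtains the upper bound from Theorem~\ref{UpBoundOracle} (the oracle risk \(\E[|A\setminus\Ah|]/\lambda\le|\Eb|/\lambda\)) and for the lower bound it reduces the minimax risk to a Bayes risk with respect to a randomized (Bernoulli-perturbation) prior in the spirit of Korostelev--Tsybakov, then bounds the Bayes risk at its minimum, omitting the details. You instead run a two-point Le Cam argument: the nested pair \(A_0\subsetneq A_1\) with \(|A_1\setminus A_0|=c/\lambda\), the observation that the two PPP laws share the factor on \(A_0\) and differ only on \(A_1\setminus A_0\), and the exact computation \(\mathrm{TV}(\P_{A_0},\P_{A_1})=1-e^{-\lambda|A_1\setminus A_0|}\) are all correct (the total variation between the empty configuration and a PPP law is indeed one minus the void probability), and the resulting \(c^2e^{-c}/(8\lambda^2)\) bound is what is claimed. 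Your approach is more elementary and fully self-contained, and it makes the constant explicit; the Bayes-prior approach generalizes more readily to the sharper, class-specific lower bounds (such as \(\lambda^{-(d+3)/(d+1)}\) for convex sets) that the paper alludes to elsewhere. The one point to be upfront about is the same in both arguments: Assumption~\ref{Assumption} alone does not guarantee the existence of two sets in \(\Ab\) with distinct volumes (take \(\Ab=\{\varnothing,\Eb\}\)), so the \(\lambda^{-2}\) lower bound implicitly requires a non-degeneracy/richness condition on the class. You flag this explicitly, which the paper does not; for all the concrete classes considered (and in particular for concentric sets, where \(\lambda^{-2}\) is attained), your nested-pair construction is available, so the gap is only one of stating the hypothesis, not a flaw in the argument.
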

\begin{remark} The rate \(	\bigO(\lambda^{-1})\) is minimax for estimating the volume in some parametric classes of sets, in particular, the class of concentric sets, whereas the rate \(\bigO( \lambda^{-1/2})\) is established for estimating the volume in the class of compact sets, see Section~\ref{ClassesOfBodies}.
\end{remark}
\begin{proof}
	The upper bound in \eqref{eq:ddvvvbe} follows directly from Theorem~\ref{UpBoundOracle}.
	The lower bound is obtained by reducing the minimax risk to the Bayes risk and then lower-bounding the Bayes risk at its minimum. These steps are fairly standard, 
	cf. \cite{korostelev1993minimax}, and we hence omit them here.
\end{proof}

\section{Data-driven estimator of the volume}
\label{DataDriveEstimation}
The main ingredient to deriving the estimator of \(\lambda\) is the fact that 
the closure of the complement of the \(\Ab\)-wrapping hull \(\hat\Kh\eqdef\overline{\Ah^c}\) 
is in fact an $(\F_K)$-stopping set according to Lemma~\ref{LemmaStoppingSet}. 
Moreover in analogy with a time-indexed Poisson process, 
our problem boils down to the estimation of the intensity of a time-indexed Poisson process 
starting from an unknown origin. To see this, recall that  according to Theorem~\ref{LemMeasCond},
the number of points \(N_\circ\) lying inside the wrapping hull \(\Ah\) is Poisson-distributed with intensity \(\lambda_\circ \eqdef \lambda | \Ah |\) 
provided that \(|\Ah| > 0\):
\begin{EQA}[c]
	N_\circ \cond \F_{\hat\Kh} \sim \text{Poiss}(\lambda_\circ).
	%N_\circ \cond \F_{\hat\Kh}, \equiv 0\,.
\end{EQA}
We aim to find an estimator for \(\lambda_\circ^{-1}\). On the event \(|\Ah| > 0\), we follow the idea developed in \cite{BaldinReiss16}. 
That is to say, we use that the first jump time \(\tau\) of a time-indexed Poisson process \((Y_t, t > 0)\) with intensity \(\nu > 0\) 
is \(\text{Exp}(\nu)\)-distributed and hence \(\E[\tau] = \nu^{-1}\). Using the memoryless property of the 
exponential distribution, we then have 
\begin{EQA}[c]
	\E[\tau | Y_1 = m] = \frac{1}{m+1}\Ind(m \ge 1)+ (1 + \nu^{-1})\Ind(m = 0)\,.
\end{EQA}
Therefore, we conclude that 
\[ \hat\mu(N_\circ, \lambda_\circ) \eqdef \begin{cases} (N_\circ+1)^{-1},& \text{ for }N_\circ\ge 1,\\ 1+\lambda_\circ^{-1},&\text{ for } N_\circ=0
\end{cases}
\]
satisfies \(\E[\hat\mu(N_\circ, \lambda_\circ) | \F_{\hat\Kh}] = \lambda_\circ^{-1}\) provided that \(|\Ah| > 0\). Omitting the term 
depending on \(\lambda_\circ\) in the unlikely event \(N_\circ = 0\), we derive our final estimator:
\begin{EQA}[c]
	\label{FinEst}
	\hat\vartheta = |\Ah| + \frac{N_\partial}{N_\circ + 1} |\Ah| = \frac{N + 1}{N_\circ + 1} |\Ah|\,. 
\end{EQA}
\begin{remark}
	As it follows from Definition~\ref{WrapDef}, a wrapping hull \(\Ah\) may consist of disjoint sets, in which case 
	the number of points \(N_{\circ,k}\) lying inside a piece \(\Ah_k\) satisfies 
	\(N_{\circ,k} \cond \F_{\hat\Kh} \sim \text{Poiss}(\lambda |\Ah_k|)\) due to the homogeneity of the Poisson point process. 
	This fact can further be used to estimate \(\lambda^{-1}\) locally. However, in the homogeneous case, 
	we prefer to use the total number of points to estimate the intensity.
\end{remark}

Note that a more explicit bound can be derived using the Cauchy-Schwarz inequality given  a bound on the 
expected number of points \(N_{\partial}\) lying on the wrapping hull \(\Ah\) and  a bound on the moments of the missing volume \( | A \setminus \Ah|\).
This clearly depends on a considered class of sets satisfying Assumption~\ref{Assumption}. The following very general oracle inequality holds 
for the mean squared error of the estimator \( \hat\vartheta\). Its proof 
 can be adapted from the proofs of Thm.~4.4 and Thm.~4.5  in \cite{BaldinReiss16} and we hence omit it here.
 % Its proof 
%is deferred to the Appendix.
%fairly similar to the proof 
%of Thm.~4.4 and Thm.~4.5  in \cite{BaldinReiss16} and we hence omit it here.

\begin{theorem}
	\label{new_estimator_risk_wrap}
	The following oracle inequality for the risk of the estimator \( \hat\vartheta\) holds
	for all $A \in \Ab$ whenever $\lambda\abs{A}\ge 1$:
	\begin{EQA}[c]
		\E[(\hat\vartheta - |A|)^2]^{1/2} \le  (1+c \alpha(\lambda, A))\Var(\hat\vartheta_{oracle})^{1/2} +r(\lambda,A)\,,
	\end{EQA}
	where
	\begin{align*}
	\alpha(\lambda, A) &:= \frac{1}{\abs{A}} \Bigl(\frac{ \Var(\abs{A\setminus\Ah} )}{ \E[ \abs{A\setminus\Ah}]}
	+   \E[ \abs{A\setminus\Ah}] \Bigr)\,,\\
	r(\lambda,A) &:=c_1 \lambda^{-1} \E[N_{\partial}^4]^{1/4} \P(|A\setminus \Ah| \ge |A|/2)^{1/4}\,,
	\end{align*}
	with some numeric constants \(c,c_1 > 0\). In particular, \(\alpha(\lambda, A)\) is bounded by some universal constant.
\end{theorem}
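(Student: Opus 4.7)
The plan is to reduce the analysis of $\hat\vartheta$ to the already-understood oracle estimator $\hvo$ of Theorem~\ref{UpBoundOracle} via the triangle inequality in $L^{2}$:
\[
\E[(\hat\vartheta-|A|)^{2}]^{1/2}\le\|\hat\vartheta-\hvo\|_{2}+\Var(\hvo)^{1/2}.
\]
Since the second term is already the oracle standard deviation, the task reduces to showing that $\|\hat\vartheta-\hvo\|_{2}\le c\,\alpha(\lambda,A)\Var(\hvo)^{1/2}+r(\lambda,A)$.

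The key algebraic input is the identity, obtained by combining the definitions $\hat\vartheta=\frac{N+1}{N_\circ+1}|\Ah|$ and $\hvo=|\Ah|+N_{\partial}/\lambda$ with $N=N_\circ+N_\partial$:
\[
\hat\vartheta-\hvo=N_{\partial}\Bigl(\frac{|\Ah|}{N_\circ+1}-\frac{1}{\lambda}\Bigr).
\]
Conditional on $\F_{\hat\Kh}$, Theorem~\ref{LemMeasCond} makes $N_{\partial}$ and $|\Ah|$ deterministic while $N_\circ\sim\Po(\lambda|\Ah|)$, so the conditional bias and variance of $1/(N_\circ+1)$ around $1/(\lambda|\Ah|)$ can be computed in closed form from the Poisson moment generating function, giving leading orders $\bigO((\lambda|\Ah|)^{-2})$ and $\bigO((\lambda|\Ah|)^{-3})$ respectively whenever $\lambda|\Ah|$ is bounded away from zero.

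I would then split on the event $B\eqdef\{|A\setminus\Ah|<|A|/2\}$, where $|\Ah|>|A|/2$ and, using the hypothesis $\lambda|A|\ge 1$, $\lambda|\Ah|\ge 1/2$. On $B$, the conditional bound above yields
\[
\E[(\hat\vartheta-\hvo)^{2}\Ind_{B}\mid\F_{\hat\Kh}]\lesssim\frac{N_{\partial}^{2}}{\lambda^{3}|\Ah|^{3}}+\frac{N_{\partial}^{2}}{\lambda^{4}|\Ah|^{4}},
\]
and taking expectations reduces everything to moments of $N_{\partial}$ on the good event, which by Corollary~\ref{EfronsIdenPoisson} and its second-moment analogue (another stopping-set martingale identity) are controlled by $\E[|A\setminus\Ah|]$ and $\Var(|A\setminus\Ah|)+\E[|A\setminus\Ah|]^{2}$. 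Regrouping produces exactly the two pieces of $\alpha(\lambda,A)$ multiplied by $\Var(\hvo)^{1/2}=(\E[|A\setminus\Ah|]/\lambda)^{1/2}$. On $B^{c}$, I would apply Cauchy--Schwarz with fourth moments, $\E[(\hat\vartheta-\hvo)^{2}\Ind_{B^{c}}]\le\E[(\hat\vartheta-\hvo)^{4}]^{1/2}\P(B^{c})^{1/2}$, together with the deterministic bound $|\hat\vartheta-\hvo|\lesssim N_{\partial}(|\Ah|+\lambda^{-1})$; the factor $|A|+\lambda^{-1}$ can be absorbed into $\lambda^{-1}$ after using $\lambda|A|\ge 1$ and exploiting that on $B^{c}$ the expectation bound is further tightened by $\P(B^{c})^{1/2}$, producing $r(\lambda,A)$.

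The main obstacle I expect is the careful bookkeeping at the transition from the conditional estimate on $B$ to the unconditional bound: one needs a variance version of Efron's identity (extending Corollary~\ref{EfronsIdenPoisson}) to identify the combination $\E[N_{\partial}^{2}]/\lambda^{2}$ with a quantity of the form $\Var(|A\setminus\Ah|)+\E[|A\setminus\Ah|]^{2}$, and then to show that after division by $|A|\E[|A\setminus\Ah|]$ and multiplication by $\Var(\hvo)^{1/2}$ one recovers precisely the two-term form of $\alpha(\lambda,A)$. The remaining work is routine and parallels the argument for the convex case in Theorems~4.4 and~4.5 of \cite{BaldinReiss16}, so I would follow that blueprint and only rederive the stopping-set moment identities in the generality of Assumption~\ref{Assumption}.
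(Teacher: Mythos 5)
Your skeleton (the exact identity $\hat\vartheta-\hvo=N_{\partial}\bigl(\tfrac{|\Ah|}{N_\circ+1}-\tfrac1\lambda\bigr)$, conditioning on $\F_{\hat\Kh}$ via Theorem~\ref{LemMeasCond}, splitting on $B=\{|A\setminus\Ah|<|A|/2\}$, and converting moments of $N_{\partial}$ into moments of $|A\setminus\Ah|$ by the Efron-type inequality) is the right one and matches the blueprint the paper points to. But your opening reduction already concedes the theorem. The intermediate claim $\|\hat\vartheta-\hvo\|_{2}\le c\,\alpha\,\Var(\hvo)^{1/2}+r$ is not achievable: on the good event your own conditional computation gives
\begin{EQA}[c]
	\E[(\hat\vartheta-\hvo)^2\Ind_B]\;\lesssim\;\frac{\E[N_\partial^2]}{\lambda^3|A|}\;\lesssim\;\frac{\E[|A\setminus\Ah|^2]}{\lambda|A|}\;=\;\alpha(\lambda,A)\,\Var(\hvo)\,,
\end{EQA}
i.e.\ $\|\hat\vartheta-\hvo\|_2\asymp\sqrt{\alpha}\,\Var(\hvo)^{1/2}$, a full square root larger than your target when $\alpha\to0$ (which is the interesting regime; $\alpha\le 2$ always, and $\alpha\to0$ for the classes of interest). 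The plain $L^2$ triangle inequality therefore only yields $(1+c\sqrt{\alpha})\Var(\hvo)^{1/2}+r$, strictly weaker than the stated $(1+c\alpha)$. The missing idea is near-orthogonality: $\hvo-|A|$ is $\F_{\hat\Kh}$-measurable and $\E[\hat\vartheta-\hvo\mid\F_{\hat\Kh}]=-N_\partial\ex^{-\lambda|\Ah|}/\lambda$ is exponentially small, so in the expansion of $\E[(\hat\vartheta-|A|)^2]$ the cross term is negligible, $\E[(\hat\vartheta-|A|)^2]\le(1+c\alpha)\Var(\hvo)+\dots$, and one concludes with $\sqrt{1+c\alpha}\le1+c\alpha/2$. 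Without this step the theorem in its stated form does not follow.

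Two further points. First, on $B^{c}$ your absorption of the factor $|A|+\lambda^{-1}$ into $\lambda^{-1}$ "using $\lambda|A|\ge1$" goes the wrong way: $\lambda|A|\ge1$ means $|A|\ge\lambda^{-1}$, so the deterministic bound $|\hat\vartheta-\hvo|\lesssim N_\partial(|\Ah|+\lambda^{-1})$ leaves a factor of order $|A|$, not $\lambda^{-1}$, and there is no spare power of $\P(B^c)$ left after Cauchy--Schwarz to fix this. To obtain the stated $r(\lambda,A)$ you must keep the randomness of $N_\circ$ and use the conditional Poisson bound $\E[(N_\circ+1)^{-4}\mid\F_{\hat\Kh}]\lesssim(\lambda|\Ah|)^{-4}$, valid for every $|\Ah|>0$, so that $\bigl\|\,|\Ah|/(N_\circ+1)\,\bigr\|$ is of order $\lambda^{-1}$ in conditional $L^4$ regardless of the size of $|\Ah|$. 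Second, a minor slip: your displayed conditional bound on $B$ drops the prefactor $|\Ah|^2$ coming from $\bigl(\tfrac{|\Ah|}{N_\circ+1}-\tfrac1\lambda\bigr)^2=|\Ah|^2\bigl(\tfrac{1}{N_\circ+1}-\tfrac{1}{\lambda|\Ah|}\bigr)^2$; the correct right-hand side is $N_\partial^2/(\lambda^3|\Ah|)+N_\partial^2/(\lambda^4|\Ah|^2)$. This does not change the final rate but is needed for the bookkeeping that recovers $\alpha$ exactly.
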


\subsection{Volume estimation in the uniform model} 
\label{UnifModelVolEst}
In the PPP model, the data we observe are uniformly distributed points over a set in some given class and the number of points is a realisation of a Poisson random variable. The uniform model, 
\begin{EQA}[c]
	X_1, ..., X_n \overset{i.i.d.}{\sim} U(A), \, \quad A \in \Ab\,,
\end{EQA}
is closely related to the PPP model and assumes that the number of points \(n\) is fixed. In stochastic geometry, the objects studied in the PPP model typically exhibit a similar asymptotic behaviour in the uniform model and vice versa, see e.g. \cite{pardon2011} and references therein for a study of the functionals of the convex hull. This section
examines which results of the present paper derived in the PPP model remain true in the uniform model. 

It is relatively straightforward to show that the wrapping hull remains a sufficient and complete statistic in the uniform model with slightly adjusted arguments of the proof of Theorem~\ref{SuffComplThm}. It is unknown however 
whether there exists an UMVU estimator in the uniform model. Nevertheless, an estimator 
\begin{EQA}[c]
	\label{FinEstUniform}
	\hat\vartheta_{\text{unif},n} := \frac{n + 1}{n_\circ + 1} |\Ah|\,,
\end{EQA}
where \(n_\circ \) is the number of points lying inside the wrapping hull, inherits the 
same rate of convergence as the final estimator \(\hat\vartheta\) in \eqref{FinEst} in the PPP model due to the following result
\begin{proposition}[Poissonisation]
	Let \(n = \lfloor \lambda |A| \rfloor > 0\) with \(A\) being any set in a class \(\Ab\).
	Then letting \(\lambda \to \infty\) the following asymptotic equivalence result holds true
	\begin{EQA}[c]
		\label{PoissDePoissMinMax}
		\E \big[(\hat\vartheta_{\text{unif},n} - |A|)^2\big] \thicksim 
		\E \big[(\hat\vartheta - |A|)^2\big] \,, \quad \forall A \in \Ab\,.
	\end{EQA}
	Furthermore, the minimax lower bounds satisfy 
	\begin{EQA}[c]
		\label{PoissDePoissMinMax2}
		\inf_{ \hat{\vartheta}_n} \sup_{A \in \Ab}	\E \big[( \hat{\vartheta}_n - |A|)^2\big] \thicksim 
		\inf_{ \hat{\vartheta}_\lambda} \sup_{A \in \Ab}	\E \big[(\hat{\vartheta}_\lambda - |A|)^2\big] \,,
	\end{EQA}
	where the infimum on the left-hand side extends over all estimators in the uniform model, whereas the infinum on the right-hand side extends over all estimators in the Poisson point process model.
\end{proposition}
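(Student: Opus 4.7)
The plan rests on the classical Poissonisation/de-Poissonisation device. Conditionally on $N=k$, the PPP of intensity $\lambda$ on $A$ reduces to the uniform model with $k$ i.i.d.\ observations, so
\[
\E\big[(\hat\vartheta - |A|)^2\big] \;=\; \sum_{k\ge 0} \P(N=k)\,\E\big[(\hat\vartheta_{\text{unif},k} - |A|)^2\big],
\]
which is the identity driving both claims.

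For the risk equivalence \eqref{PoissDePoissMinMax}, write $R_k := \E[(\hat\vartheta_{\text{unif},k} - |A|)^2]$ and $n = \lfloor\lambda|A|\rfloor$. Standard Poisson tail bounds show that $\P(N = \cdot)$ concentrates on the window $W := [n - c\sqrt{n}\log\lambda,\, n + c\sqrt{n}\log\lambda]$, with total mass outside $W$ decaying faster than any polynomial in $\lambda$; since $R_k \le |A|^2 \le 1$ trivially (recall $\mathbf{E}=[0,1]^d$), the tail contribution to the mixture is negligible. Inside $W$, $R_k$ varies slowly: adding one uniform point to a sample of size $k$ leaves $|\Ah|$ unchanged with high probability, because the new point typically falls inside the current hull, and alters the prefactor $(k+1)/(k_\circ+1)$ only by a multiplicative $1+O(1/k)$, so $\hat\vartheta_{\text{unif},k+1}/\hat\vartheta_{\text{unif},k} = 1 + O(1/k)$. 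Iterating this perturbation over the $O(\sqrt{n}\log\lambda)$ indices in $W$ yields $R_k/R_n = 1 + o(1)$ uniformly, whence the mixture is asymptotic to $R_n$, which is exactly \eqref{PoissDePoissMinMax}.

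For the minimax equivalence \eqref{PoissDePoissMinMax2}, the easy direction is immediate: any uniform-model scheme $(\hat\vartheta_{\text{unif},k})_{k \ge 0}$ induces a PPP estimator $\hat\vartheta_\lambda := \hat\vartheta_{\text{unif},N}$ whose risk equals the above mixture and is hence comparable to the uniform risk at sample size $n$; taking suprema over $A$ and infima over estimators yields $\inf_\lambda \sup_A \lesssim \inf_n \sup_A$. The reverse direction is the main obstacle, since naively conditioning a PPP estimator on $\{N=n\}$ would inflate the risk by the factor $\P(N=n)^{-1} \asymp \sqrt{n}$. The standard remedy is a split-sample Poissonisation: partition the PPP into two independent PPPs of intensity $\lambda/2$, use the first to certify that the second contains at least $n$ points with probability tending to one (uniformly in $A \in \Ab$ by Poisson concentration together with $|A|\le 1$), and then feed the first $n$ points of the second subsample into any near-optimal uniform-model estimator. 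Routine bookkeeping shows that the resulting PPP procedure attains the uniform minimax risk at sample size $n$ up to a constant factor, which is absorbed in the $\thicksim$ relation and establishes \eqref{PoissDePoissMinMax2}.
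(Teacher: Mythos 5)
Your overall route --- conditioning on $N=k$ to write the PPP risk as a Poisson mixture of uniform-model risks, concentrating $N$ on a window around $n=\lfloor\lambda|A|\rfloor$, and transferring estimators between the two models --- is the same Poissonisation device the paper uses. The paper, however, only writes out the minimax claim \eqref{PoissDePoissMinMax2}, and does so by contradiction: a uniform-model estimator beating the PPP minimax rate would, evaluated at the random $N$ and with the sum restricted to $k\in[\lfloor\lambda|A|/2\rfloor,\lfloor 2\lambda|A|\rfloor]$ (tail controlled by Bennett's inequality), beat the PPP minimax rate as well, contradicting Theorem~\ref{MinimaxLowerBoundThm}; the remaining claims are asserted to follow by the same technique. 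Your ``easy direction'' is exactly this argument in contrapositive form.

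The genuine gap is in your slow-variation step for \eqref{PoissDePoissMinMax}. From $\hat\vartheta_{\text{unif},k+1}=\hat\vartheta_{\text{unif},k}\,(1+O(1/k))$ one can only conclude that the \emph{root} risks differ additively, $R_{k+1}^{1/2}\le R_k^{1/2}+O(1/k)$; accumulating this over the $O(\sqrt{n}\log\lambda)$ indices of your window produces an additive error $O(\log\lambda/\sqrt{n})$ in $R^{1/2}$, which yields $R_k/R_n=1+o(1)$ (or even $R_k\lesssim R_n\lesssim R_k$, which is all that $\thicksim$ requires here) only if $R_n\gg(\log\lambda)^2/n$. That fails precisely for the fast-rate classes the paper treats, e.g.\ polytopes with fixed normal directions where Theorem~\ref{polytopesFixed} gives $R_n\asymp n^{-2}\log n$, or parametric classes at rate $n^{-2}$; so the perturbation argument does not close the proof of \eqref{PoissDePoissMinMax}. (A smaller slip: $R_k\le|A|^2$ is false, since $\hat\vartheta_{\text{unif},k}=\frac{k+1}{k_\circ+1}|\Ah|$ can be as large as $k+1$; the crude bound $R_k\lesssim k^2$ still kills the tail term.) Your split-sample construction for the hard direction of \eqref{PoissDePoissMinMax2} also has an arithmetic flaw: two independent thinned copies of intensity $\lambda/2$ each contain about $n/2$ points, so the second subsample does \emph{not} contain $n$ points with probability tending to one; what you actually obtain is the uniform minimax risk at sample size $cn$ with $c<1$, and relating that to the risk at $n$ requires a regularity of the minimax risk in the sample size that you have not established --- the same regularity, incidentally, that the paper's one-line bound $\E[(\hat{\vartheta}_N'-|A|)^2\mid N=k]\le c_1\,\E[(\hat{\vartheta}_n'-|A|)^2]$ over the whole window silently relies on.
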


\begin{proof}
	We only prove \eqref{PoissDePoissMinMax2} here. \eqref{PoissDePoissMinMax} can then be proved exploiting similar arguments. Let us first show the inequality \( "\gtrsim" \). Assume it does not hold and that there exists an estimator \(\hat{\vartheta}_n^\prime \) in the uniform model with the rate of convergence faster than the minimax optimal rate in the PPP model. Then for the estimator 
	\(\hat{\vartheta}_{N}^\prime \) we have for any \(A \in \Ab\)
	\begin{EQA}
		\E \big[(\hat{\vartheta}_{N}^\prime  - |A|)^2\big] & =& \sum_{k = 1}^{\infty } \E \big((\hat{\vartheta}_{N}^\prime  - |A|)^2 \cond N = k\big)\, \P(N = k) \\ \\
		&\le& \sum_{k =  \lfloor \lambda |A|/2 \rfloor }^{ \lfloor 2 \lambda |A| \rfloor  } \E \big((\hat{\vartheta}_{N}^\prime  - |A|)^2 \cond N = k\big)\, \P(N = k)  + c_2 \exp(-c_3 n )\\
		&\le& c_1 \E \big[( \hat{\vartheta}_n^\prime - |A|)^2\big]  + c_2 \exp(-c_3 n)\,,
	\end{EQA}
	for some constants \(c_1,c_2,c_3 > 0\) using Bennett's inequality, a contradiction in view of Theorem~\ref{MinimaxLowerBoundThm}. The other direction follows using the same technique.
\end{proof}

\subsection{Efron's inequality  for the wrapping hull}
\label{EfronWrapSect}
In this section, we show that  the rate of convergence of the risk for the estimator \(\hat\vartheta\) in Theorem~\ref{new_estimator_risk_wrap} 
hinges in fact upon only a deviation of the missing volume  \(| A \setminus \Ah|\). 
More than 50 years ago Efron showed  in \cite{efron1965convex} that the moments of the number of the points \(N_{{\partial},k}\) lying on the 
boundary of a convex hull \(\Ch_{k}\) in the uniform model \(X_1,...,X_k \overset{i.i.d.}{\sim} U(C)\), with \(C \subset \R^d\)
being a convex set, satisfies the identity 
\begin{EQA}[c]
	\E[N_{{\partial},k}^q] = \sum_{r = 1}^{q} n(k, q, r) \E[|\Ch_{k-r}|^r]\,,
\end{EQA}
where \(n(k, q, r)\) is the number of \(q\)-tuples from \(1,...,k\) having exactly \(r\) different values, 
\(n(k, q, r) = {k \choose r} \sum_{m = 1}^{r} (-1)^{r - m}  {r \choose m} m^q\). This yields a striking dimension-free 
asymptotic equivalence result,
\begin{EQA}[c]
	\E[N_{{\partial},k}^q] \thicksim k^q \E[|C \setminus \Ch_{k}|^q]\,.
	\label{EfronsIneqRev}
\end{EQA}

We here extend a one-sided version of this results to the wrapping hull. 
\begin{proposition}[Efron's inequality  for the wrapping hull]
	\label{EfronsProp}
	Let \(\Ab\) be any class satisfying Assumption~\ref{Assumption} and \(\Ah\) be the corresponding wrapping hull of the PPP points 
	of intensity \(\lambda > 0\)
	over a set \(A \in \Ab\). 
	Then the following asymptotic inequality holds
	\begin{EQA}[c]
		\E[N_{\partial}^q] \lesssim \lambda^q \E[|A \setminus \Ah|^q]\,,
	\end{EQA}
	provided that the probability of observing \(q\) points lying on the boundary of the wrapping hull \(\Ah\) is non-zero.
\end{proposition}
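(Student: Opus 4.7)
The plan is to combine the Slivnyak--Mecke formula for Poisson point processes with the monotonicity of the wrapping hull, and then pass from factorial to ordinary moments. First, I would study the $q$-th factorial moment $(N_\partial)_q \eqdef N_\partial(N_\partial-1)\cdots(N_\partial-q+1)$. Writing $N_\partial = \sum_{X\in\Nc}\Ind(X\in\partial\Ah)$, expanding the product over ordered distinct $q$-tuples of points, and applying Mecke's multivariate formula for Poisson point processes gives
\[
\E\bigl[(N_\partial)_q\bigr] = \lambda^q \int_{A^q} \P\bigl(x_1,\ldots,x_q \in \partial\,\wrap\{\Nc \cup \{x_1,\ldots,x_q\}\}\bigr)\, dx_1 \cdots dx_q,
\]
where on the right-hand side $\Nc$ is an independent Poisson point process of intensity $\lambda$ on $A$. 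This is the natural $q$-point analogue of the first-moment identity $\E[N_\partial] = \lambda\E[|A\setminus\Ah|]$ of Corollary~\ref{EfronsIdenPoisson}.

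Next I would use a simple monotonicity argument. Set $\Ah' \eqdef \wrap\{\Nc\}$ and $\Ah'' \eqdef \wrap\{\Nc\cup\{x_1,\ldots,x_q\}\}$. Because $\Ab$ is closed under arbitrary intersections (Assumption~\ref{Assumption}), adding points can only enlarge the wrapping hull, so $\Ah' \subseteq \Ah''$ and hence $(\Ah')^\circ \subseteq (\Ah'')^\circ$. If $x_i\in\partial\Ah''$, then $x_i\notin(\Ah'')^\circ$, forcing $x_i\notin(\Ah')^\circ$. Up to a Lebesgue-null set (the boundary $\partial\Ah'$, which is null almost surely for all classes considered in the paper by a standard general-position argument), this amounts to $x_i \in A\setminus\Ah'$. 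Therefore
\[
\E\bigl[(N_\partial)_q\bigr] \le \lambda^q \int_{A^q} \P\bigl(x_1,\ldots,x_q \in A\setminus\Ah'\bigr)\, dx_1 \cdots dx_q = \lambda^q \E\bigl[|A\setminus\Ah|^q\bigr],
\]
where the last equality uses that $\Ah'$ has the same distribution as $\Ah$.

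Finally I would pass from factorial to ordinary moments via the Stirling expansion $N_\partial^q = \sum_{r=1}^{q} S(q,r)\,(N_\partial)_r$, which combined with the previous display gives
\[
\E[N_\partial^q] \le \sum_{r=1}^{q} S(q,r)\, \lambda^r\, \E\bigl[|A\setminus\Ah|^r\bigr].
\]
By Jensen's inequality, $\E[|A\setminus\Ah|^r] \le \E[|A\setminus\Ah|^q]^{r/q}$, and combined with the first-moment identity $\lambda\E[|A\setminus\Ah|]=\E[N_\partial]$, the non-degeneracy hypothesis $\E[N_\partial]\gtrsim 1$ (implicit in the assumption that $q$ points can sit on the boundary with positive probability, valid for $\lambda$ sufficiently large) ensures that all subdominant terms $r<q$ are absorbed into the $r=q$ term up to a multiplicative constant. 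The principal conceptual obstacle is the monotonicity step: it is precisely the intersection-stability of $\Ab$ that makes the wrapping hull well-defined \emph{and} monotonically growing in the point configuration, and this is the geometric input that converts the boundary event at $x_i$ into the cleanly integrable volume event $x_i\in A\setminus\Ah'$. A minor technical caveat, handled by the general-position argument mentioned above, is ensuring that $|\partial\Ah'|=0$ almost surely.
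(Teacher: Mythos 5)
Your proof is correct, but it takes a genuinely different route from the paper's. The paper works first in the uniform model and uses Efron's original exchangeability trick: it writes $\E[|A\setminus\Ah_k|^q]$ as $|A|^q$ times the probability that $q$ fresh uniform points all miss $\Ah_k$, lower-bounds this by the probability that they all lie on $\partial\Ah_{k+q}$ (the same monotonicity observation as yours), and then uses exchangeability of the $k+q$ points to identify the latter with $\E\binom{N_{\partial,k+q}}{q}/\binom{k+q}{q}$; the result is then transferred to the PPP model by Poissonisation, with a Cauchy--Schwarz and large-deviation bound to control the de-Poissonisation error. Your argument stays entirely within the PPP model and replaces the exchangeability step by the multivariate Mecke formula, which is its exact Poisson analogue; this buys you a cleaner derivation that avoids the Poissonisation tail estimates altogether. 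The price is that Mecke naturally produces the factorial moment $\E[(N_\partial)_q]$, so you need the Stirling-number expansion and the absorption of the lower-order terms $r<q$ via Jensen and the first-moment identity $\lambda\E[|A\setminus\Ah|]=\E[N_\partial]$ --- but note the paper has the same issue hidden in its ``rearranging the terms'' step, since $\E\binom{N_{\partial,k+q}}{q}$ is also a factorial moment. Two shared technicalities deserve a remark: both proofs implicitly require $|\partial\Ah|=0$ almost surely (your ``general-position argument''), which is not a formal consequence of Assumption~\ref{Assumption} alone and is glossed over in the paper as well; and both need $\lambda^q\E[|A\setminus\Ah|^q]\gtrsim 1$ (equivalently $\E[N_\partial]\gtrsim 1$) for the lower-order terms to be absorbed, which is what the proposition's non-degeneracy hypothesis is meant to supply. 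Your monotonicity step --- that intersection-stability makes $\wrap$ monotone in the point configuration, so a point on $\partial\Ah''$ cannot lie in $(\Ah')^\circ$ --- is exactly the geometric input the paper uses, just stated more explicitly.
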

\begin{remark} 
	It follows by Jensen's inequality and Corollary~\ref{EfronsIdenPoisson}, that 
	\(\E[N_{\partial}^q]  \ge \E[N_{\partial}]^q = \lambda^q \E[|A \setminus \Ah|]^q\). For some examples, 
	like the class of convex sets, this in fact implies \(\E[N_{\partial}^q] \thicksim \lambda^q \E[|A \setminus \Ah|^q]\).
\end{remark}
\begin{remark} 
	Identities that relate the functionals of the convex hull of the points distributed uniformly over a convex set are thoroughly studied in stochastic geometry, see \cite{pardon2011, buchta2013,reitzner2015}. 
	
\end{remark}

\begin{proof}
	Let us first consider the uniform model and then transfer the result to the PPP model using Poissonisation. 
	We follow Efron's idea, see also \cite{reitzner2015, brunel:tel-01066977}, that 
	\begin{EQA}
		\E[|A \setminus \Ah_k|^q] &=& |A|^q \, \P(X_{k + 1} \notin \Ah_{k},..., X_{k + q} \notin \Ah_{k} )  \\
		& \ge &  |A|^q\, \P(X_{k + 1} \in \partial \Ah_{k+q},..., X_{k + q} \in \partial \Ah_{k+q}) \\
		& = &  \frac{|A|^q}{{k + q \choose q} }\,  \E \sum_{}^{} \Ind(X_{i_1} \in \partial \Ah_{k+q},..., X_{i_q} \in \partial \Ah_{k+q}) \\
		&=& \frac{|A|^q}{{k + q \choose q} }\,  \E {N_{{\partial}, k+q} \choose q}\,,
	\end{EQA}
	the sum being taken over all tuples \((i_1, ..., i_q)\) from the integers \(1,..., k+q\). Rearranging the terms, this entails 
	\( \E [N_{\partial, k}^q] \lesssim k^q \E[|A \setminus \Ah_k|^q] \).
	
	Using Poissonisation, we further derive for the PPP model, 
	\begin{EQA}[c]
		\E[|A \setminus \Ah|^q] = \sum_{k = 1}^{\infty } \E(|A \setminus \Ah_N|^q \cond N = k)\, \P(N = k) \\
		\gtrsim \sum_{k = 1}^{\infty } (2 \lambda|A|)^{-q} \E\big[ N_{\partial, k}^q\big] \, \P(N = k) + 
		\sum_{k = \lfloor 2 \lambda|A| \rfloor}^{\infty}( k^{-q} - (2 \lambda|A|)^{-q} )  \E\big[ N_{\partial, k}^q\big] \, \P(N = k)\\ 
		=(2 \lambda|A|)^{-q}  \E[N_{\partial}^q]  + 
		\sum_{k = \lfloor 2 \lambda|A| \rfloor}^{\infty}( k^{-q} - (2 \lambda|A|)^{-q} )  \E\big[ N_{\partial, k}^q\big] \, \P(N = k)
		%\frac{\ex^{-n |C|} (n |C|)^k }{k!}   \E [N_{\partial}] =  \E [N_{\partial}] 
	\end{EQA}
	with the absolute value of second sum being bounded using  the Cauchy-Schwarz inequality 
	and large deviations by 
	\begin{EQA}[c]
		c_1 \, \E[N_{\partial}^{2q}]^{1/2} \P (N \ge [2\lambda |A|])^{1/2} \le 
		c_1 \, \E[N_{\partial}^{2q}]^{1/2} \exp(-c_2 n )\,,
	\end{EQA}
	for some constants \(c_1, c_2 >0 \). Thus, \eqref{EfronsIneqRev} follows.
\end{proof}
Proposition~\ref{EfronsProp} and Theorem~\ref{new_estimator_risk_wrap} immediately suggest the following bound for the remainder term in
the oracle inequality,
\begin{EQA}[c]
	r(\lambda,A) \le c   \E[|A \setminus \Ah|^4]^{1/4}\, \P(|A\setminus \Ah| \ge |A|/2)^{1/4}\,,
\end{EQA} 
for some numeric constant \(c>0\). Therefore, the oracle inequality in Theorem~\ref{new_estimator_risk_wrap} 
hinges upon only two probabilistic results:
\begin{itemize}
	\item the ratio \(\Var(\abs{A\setminus\Ah})  / \E[ \abs{A\setminus\Ah}]\) of the moments of the missing volume,
	\item a uniform deviation inequality for the missing volume.
\end{itemize}
Both results are fairly involved and we shall only discuss here how to derive them for some simple classes of sets satisfying 
Assumption~\ref{Assumption}.

\section{Classes of sets satisfying Assumption~\ref{Assumption}}
\label{ClassesOfBodies}
This section collects some examples of classes of sets that satisfy  Assumption~\ref{Assumption}. Note that 
the class of all convex sets \(\Cb_{\text{conv}}\) satisfies the assumption and was extensively studied in \cite{BaldinReiss16}. 
The most involved statements 
in the inference on convex sets were underpinned by the abundance of results from stochastic geometry on moment bounds 
and deviation inequalities for the missing volume, see Lemma 4.6 in \cite{BaldinReiss16}. In particular,
the ratio \(\Var(\abs{C\setminus\Ch})  / \E[ \abs{C\setminus\Ch}] \thicksim 1/\lambda\) is established in 
\cite{pardon2011} for all convex sets \(C\)  in dimensions \(d = 1,2\). In dimensions \(d > 2\),
one can bound the ratio only for some subsets of the class of convex sets. 
Thus, for a convex set \(C\)  with $C^2$-boundary of positive curvature, it is known thanks to \cite{reitzner2005central} that   \(\Var(\abs{C\setminus\Ch} ) \lesssim \lambda^{-(d+3)/(d+1)}\). The lower bound
for the first moment, \( \E[ \abs{C\setminus\Ch}] \gtrsim \lambda^{-2/(d+1)}\), was shown in \cite{schutt1994random}.
For  a polytope \(C\),   the upper bound  \(\Var(\abs{C\setminus\Ch} ) \lesssim \lambda^{-2}(\log \lambda)^{d-1}\) was obtained in \cite{barany2010variance},
while the lower bound for the first moment, \( \E[ \abs{C\setminus\Ch}] \gtrsim \lambda^{-1}(\log \lambda)^{d-1}\), was proved in \cite{barany1988convex}.
A uniform deviation inequality for convex sets
obtained in Thm.~1 in  \cite{Bru14c} allows to derive sharp upper bounds on the moments of the missing volume.
The proof of the deviation inequality exploited a bound on the entropy of convex sets.
It remains an intriguing open question in stochastic geometry whether \( \lambda \Var(\abs{C\setminus\Ch}) \thicksim \E[ \abs{C\setminus\Ch}]  \)
holds universally for all convex sets in arbitrary dimensions.
Some of the classes of sets we consider here are much larger, yet very little has been known about them in the mathematical literature. 
%Thus we face the problem of exploring the structure of the considered classes. %More specifically,
%we study the metric entropy bounds and then derive 
%the deviation inequalities and moment bounds for  missing volume, which are the key ingredients of our results in this section.
\subsection{$r$-convex sets}
\label{RConnSect}
We denote by \(B(x,r) \subset \R^d\)  (resp. \(B_\circ(x,r)\)) the closed (resp. open) ball with centre \(x\)  and radius \(r\).
\begin{definition}
	A compact set \(C_r\) in \(\Eb \subset \R^d\) is called \emph{\(r\)-convex} for \(r > 0\), if its complement is the union 
	of all open Euclidean balls of diameter \(r\) that are disjoint to \(C_r\), i.e. if
	\begin{EQA}[c]
		C_r = \bigcap_{B_\circ^c(x,r) \cap C_r = \varnothing} B_\circ^c(x,r)\,.
	\end{EQA}
	We denote the class of \(r\)-convex sets by \(\Cb_r\).% and equip it with the Hausdorff distance \(d_H\).
\end{definition}
Note that an \(r\)-convex set fulfills the \emph{outside rolling ball condition}, i.e. for all \(y \in \partial C_r\) there is a closed ball \(B(x,r)\) 
such that \(y \in \partial B(x,r)\) and \(B_\circ(x,r) \cap C_r = \varnothing\). Heuristically this means 
that one can ``roll'' a ball of radius \(r\) freely over the boundary of a set. Note that according to the definition, \(r\)-convex 
sets can have ``holes'' and do not need to be connected, see Figure~\ref{r-convex_alpha_varies} for some examples. 
In the terminology of \cite{kendall1974foundations}, \(C_r \in \Cb_r \) means that the set \(C_r\) is \emph{trapped} by the balls of 
radius \(r\).
The \(r\)-convex sets were introduced in
\cite{Perkal1956} and presumably independently in \cite{efimov1959some}; see \cite{walther1997,cuevas2012} and references therein for a recent work on estimation of \(r\)-convex sets.
In the literature, much more attention has been devoted to the sets satisfying the so-called \emph{inside and outside rolling ball condition}, when both 
\(C_r\) and \(\overline{C_r^c}\) are \(r\)-convex, see \cite{mammen1995asymptotical,walther1996}. The reason probably is that sets with 
smooth boundaries (with no angles) are sometimes easier to handle with geometric arguments, see \cite{lopez2008}. 

The \(\Cb_r\)-wrapping hull is defined by 
\begin{EQA}[c]
	\Ch_r := \bigcap_{B_\circ^c(x,r) \cap \{X_1,...,X_N\} = \varnothing} B_\circ^c(x,r)
\end{EQA}
and often called the \emph{\(r\)-convex hull} in the literature. 
Thus the oracle estimator in \eqref{OracleEst} has the following form 
\begin{EQA}[c]
	\label{r_convex_hull_estimator}
	\hv_{r,oracle} := \frac{N_\partial}{\lambda} + |\Ch_r|\,,
\end{EQA}
where \(N_\partial\) is the number of sample points lying on the the \(r\)-convex hull \(\Ch_r\). In order to investigate the performance 
of this estimator according to Theorem~\ref{UpBoundOracle} it suffices to study \( \sup_{C_r \in \Cb_r}\E_{C_r}[|C_r\setminus \Ch_r|]\). 
In fact the following result holds and it is a consequence of Theorem~\ref{UpBoundOracle}.
\begin{theorem}
	\label{ThmRconv}For known intensity $\lambda > 0$, 
	the worst case mean squared error of the oracle estimator \(\hv_{r,oracle}\) over the parameter class $\Cb_r$ decays as $\lambda\uparrow \infty$  like $\sup_{C_r \in \Cb_r}\E_{C_r}[|C_r\setminus \Ch_r|] / \lambda$ in dimension $d$:
	\begin{EQA}[c]
		\limsup_{\lambda\to \infty} \lambda \sup_{C_r \in \Cb_r, |C_r| >0} \Big\{\E\big[(\hv_{r,oracle} -\abs{C_r})^2\big] /\E\big[|C_r\setminus \Ch_r|\big]\Big\}  < \infty\,.
	\end{EQA}
\end{theorem}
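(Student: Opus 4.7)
The statement follows essentially from Theorem~\ref{UpBoundOracle} once we confirm that the class $\Cb_r$ of $r$-convex subsets of $\Eb$ satisfies Assumption~\ref{Assumption}; I take the ambient compact set $\Eb$ itself to be $r$-convex (for instance a sufficiently large closed ball), so that $\Eb \in \Cb_r$ and $\emptyset \in \Cb_r$ trivially.

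The main ingredient to verify is intersection stability. Given any family $\{C_r^{(i)}\}_{i \in I} \subseteq \Cb_r$ and $C \eqdef \bigcap_{i \in I} C_r^{(i)}$, I would argue as follows. For any $y \in \Eb \setminus C$, pick $i_0 \in I$ with $y \notin C_r^{(i_0)}$; by $r$-convexity of $C_r^{(i_0)}$ there is an open ball $B_\circ(x,r) \ni y$ with $B_\circ(x,r) \cap C_r^{(i_0)} = \emptyset$, and since $C \subseteq C_r^{(i_0)}$ the same ball is disjoint from $C$. Hence the complement of $C$ is a union of open balls of radius $r$ each disjoint from $C$, which is the defining property of $r$-convexity. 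Thus $C \in \Cb_r$ and Assumption~\ref{Assumption} holds.

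With the assumption verified, Theorem~\ref{UpBoundOracle} applied to $\Ab = \Cb_r$ yields the exact identity
\[
\E_{C_r}\big[(\hv_{r,oracle} - |C_r|)^2\big] \;=\; \frac{\E_{C_r}[\,|C_r \setminus \Ch_r|\,]}{\lambda}
\]
for every $C_r \in \Cb_r$ and every $\lambda > 0$. For every $C_r$ with $|C_r| > 0$ the denominator is strictly positive: $\P(N = 0) = e^{-\lambda |C_r|} > 0$, and on $\{N = 0\}$ Definition~\ref{WrapDef} combined with $\emptyset \in \Cb_r$ forces $\Ch_r = \emptyset$, so $|C_r \setminus \Ch_r| = |C_r|$ on that event. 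Consequently the ratio appearing in the theorem statement equals $1/\lambda$ identically over $\{C_r \in \Cb_r : |C_r| > 0\}$; its supremum is $1/\lambda$, and multiplication by $\lambda$ yields the constant $1$. Taking $\limsup_{\lambda \to \infty}$ preserves this bound, so the quantity in question is finite (indeed equal to $1$). The argument is essentially bookkeeping on top of Theorem~\ref{UpBoundOracle}; I do not anticipate any substantive obstacle beyond the intersection-stability check above.
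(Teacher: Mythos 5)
Your proof is correct and takes essentially the same route as the paper, which states Theorem~\ref{ThmRconv} as a direct consequence of Theorem~\ref{UpBoundOracle} and gives no further argument. The additional bookkeeping you supply --- checking that $\Cb_r$ is intersection stable with $\varnothing,\Eb\in\Cb_r$, and that $\E[|C_r\setminus\Ch_r|]>0$ via the event $\{N=0\}$ --- is sound and simply makes explicit what the paper leaves implicit.
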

\begin{remark}
	Note that the class of convex sets \(\Cb_{\text{conv}}\) belongs to \(\Cb_r\) for all \(r>0\)
	and thus using Theorem 3.4 in \cite{BaldinReiss16} we have a lower bound on the rate of convergence,
	\begin{EQA}
		&&\inf_{ \hv_\lambda}  \lambda^{(d+3)/(d+1)} \sup_{C_r \in \Cb_r}\E_{C_r } [ ( | C_r | -\hv_\lambda)^2] \\
		&&\quad \quad   \ge 
		\inf_{ \hv_\lambda}  \lambda^{(d+3)/(d+1)} \sup_{C \in \Cb_{\text{conv}}}\E_{C } [ ( | C | -\hv_\lambda)^2] > 0\,,
	\end{EQA}
	where the infimum extends over all estimators \(\hv_\lambda\) in the PPP model with intensity \(\lambda\).
	Furthermore,  the rate $\lambda^{-(d+3)/(d+1)}$ is achieved up to a logarithmic factor for sets \(C_r \in \Cb_r \) 
	with a smooth boundary following \cite{lopez2008}.
\end{remark}
Following Section~\ref{DataDriveEstimation}, the mean squared error of the estimator 
\begin{EQA}[c]
	\label{FinEstRConv}
	\hat\vartheta_r := \frac{N + 1}{N_\circ + 1} |\Ch_r|\,,
\end{EQA}
satisfies the oracle inequality in Theorem~\ref{new_estimator_risk_wrap}. Upper bounding  
the functions \(\alpha(\lambda, C_r)\) and \(r(\lambda,C_r)\) and
establishing an exact rate of convergence
of the risk of the estimator \(\hat\vartheta_r\)  requires sharp probabilistic 
bounds similarly to the convex case and is beyond the scope of the present paper.
In order to compute \(\hat\vartheta_r\) in practice, it is crucial to know the radius \(r\).
%Let us further elaborate on an intriguing question of adaptation to the radius \(r\). 
It is clear that \(\Cb_{r_2} \subset \Cb_{r_1}  \) for all \(r_2 > r_1 > 0\). 
Define the true radius \(r^\star\) corresponding to a set \(C\) as
\begin{EQA}[c]
	r^\star = \sup \{r> 0: C \in \Cb_r \}\,.
\end{EQA}
It is intuitively evident that it is better to underestimate the radius \(r^\star\), because \(\Cb_{r^\star} \subset \Cb_{r} \)
for all \(r^\star > r > 0\). 
On the other hand, there is a typical bias-variance tradeoff in choosing the optimal parameter \(r\). 
We prefer to use large values of \(r\) when the number of sample points is scarce, 
and can afford to use small values  \(r < r^\star\) when there is an abundance of the sample points. 
We here propose a procedure for estimating \(r^\star\) based on Lepski's method \cite{lepskii1992asymptotically}, 
see also a more accessible reference, Section 8.2.1 in \cite{giné2015mathematical}.
Fix some \(R \in \R_{+}\) and \(K \in \N \) and break the interval \((0,R)\) down into \(K + 1\) pieces \(0 < r_1 < ... < r_K < R\)  
of equal length. Define an estimator for \(\hat{r}\) as 
\begin{EQA}[c]
	\label{adaptR}
	\hat{r} := \inf \big\{ r_{k-1} |\, \exists   k^\prime \le k\, : \,  | \hv_{r_k} -  \hv_{r_{k^\prime}} | >  \kappa_n \big\} \wedge r_K \,,
\end{EQA}
with \(\kappa_n = N_\delta/ n^{2}\). This calibration is suggested by Theorem~\ref{UpBoundOracle} in view of Corollary~\ref{EfronsIdenPoisson}. 
The asymptotic behaviour of the estimator \(\hat\vartheta_{\hat{r}}\) depends on an exact deviation inequality on the missing volume \(|C_r \setminus \Ch_r|\).
This question however is beyond the scope of the present paper. We provide a numerical study of this 
adaptation procedure in Section~\ref{SimulationsSection}.

\subsection{Compact sets}
Interestingly the class of all compact sets \(\Kb\) of non-zero Lebesgue measure satisfies Assumption~\ref{Assumption} as well. 
The richness of this class makes it foremost for conducting statistical inference, yet 
very little has been proposed and studied so far.
Estimation of this class of sets was studied in \cite{devroye1980detection}, where it was shown that 
the union of small Euclidean balls centred at the points of the sample is a consistent estimator of a compact set.
The \(\Kb\)-wrapping hull is just the union of sample points and so \(N_\partial = N\) and \(|\hat{K}| = 0\) a.s.
Hence for the oracle estimator in \eqref{OracleEst} we have
\begin{EQA}[c]
	\hv_{\Kb,oracle} := \frac{N}{\lambda}\,.
\end{EQA}
This estimator is unbiased 
and from \eqref{RiskNaiveEst} the following result immediately follows.

\begin{lemma}For known intensity $\lambda > 0$, 
	the worst case mean squared error of the oracle estimator \(\hv_{\Kb,oracle}\) over the parameter class $\Kb$ decays as $\lambda\uparrow \infty$  like $\lambda^{-1}$:
	\begin{EQA}[c]
		\sup_{K \in \Kb} \frac{1}{|K|} \E \big[(\hv_{\Kb,oracle} - |K|)^2\big]  =  \frac{1}{\lambda}\,.
	\end{EQA}
\end{lemma}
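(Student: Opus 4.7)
The plan is to observe that for the class of all compact sets $\Kb$, the wrapping hull degenerates: since every finite point set $\{X_1,\dots,X_N\}$ is itself a compact subset of $\Eb$ belonging to $\Kb$, the intersection defining the wrapping hull yields exactly $\hat K = \{X_1,\dots,X_N\}$. Hence $|\hat K|=0$ almost surely and $N_\partial=N$, so the oracle estimator from \eqref{OracleEst} reduces to $\hv_{\Kb,\text{oracle}} = N/\lambda$.

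Next, I would simply apply the basic Poisson identities. Since $N\sim\text{Poiss}(\lambda|K|)$, we have $\E[N]=\Var(N)=\lambda|K|$, so
\begin{equation*}
\E[\hv_{\Kb,\text{oracle}}]=|K|,\qquad \E\big[(\hv_{\Kb,\text{oracle}}-|K|)^2\big]=\Var(N/\lambda)=\frac{|K|}{\lambda}.
\end{equation*}
Dividing by $|K|$ gives the value $1/\lambda$ independently of $K$, so taking the supremum over $K\in\Kb$ with $|K|>0$ yields the claimed equality (not merely an upper bound).

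There is essentially no obstacle; the only thing to check is the identification $\hat K=\{X_1,\dots,X_N\}$, which follows from Definition~\ref{WrapDef} because the singleton $\{X_i\}$ and arbitrary unions/intersections of finitely many singletons are in $\Kb$, so the minimal compact set containing the sample is the sample itself. This also confirms consistency with \eqref{RiskNaiveEst}: the missing-volume contribution $\E[|K\setminus \hat K|]/\lambda$ from Theorem~\ref{UpBoundOracle} equals $|K|/\lambda$ here, matching the naive rate and showing that nothing is gained by the wrapping hull construction for this maximally rich parameter class.
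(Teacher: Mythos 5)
Your proposal is correct and follows essentially the same route as the paper: identify the $\Kb$-wrapping hull with the sample point set itself (so $|\hat K|=0$ a.s.\ and $N_\partial=N$), reduce the oracle estimator to $N/\lambda$, and read off the normalized risk $1/\lambda$ from the Poisson mean--variance identity, which is exactly the computation in \eqref{RiskNaiveEst}. The extra detail you supply on why the minimal compact set containing the sample is the sample itself, and the cross-check against Theorem~\ref{UpBoundOracle} via $|K\setminus\hat K|=|K|$, are both consistent with the paper's (very terse) argument.
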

It seems impossible without imposing further structure on the class \(\Kb\) to estimate \(\lambda\) in this scenario.

\subsection{Polytopes}
It was noted in \cite{BaldinReiss16}, that the estimator of the volume based on the 
\(\Cb_{\text{conv}}\)-wrapping hull estimator (the convex hull \(\Ch\)) is adaptive to the class of polytopes \(\Pb\) (see Remark~3.3). 
In fact, the estimator 
\begin{EQA}[c]
	\label{polytopes_hull_estimator}
	\hv_{\Pb,oracle} := \frac{N_\partial}{\lambda} + |\Ch|\,
\end{EQA}
satisfies
\begin{lemma}
	\label{PolyLemR}For known intensity $\lambda > 0$, 
	the worst case mean squared error of the oracle estimator \(\hv_{\Pb,oracle}\) over the parameter class $\Pb$ decays as $\lambda\uparrow \infty$  like $\lambda^{-2}(\log(\lambda))^{d-1}$:
	\begin{EQA}[c]
		\limsup_{\lambda\to \infty} \lambda^{2}(\log(\lambda))^{1-d} \sup_{P \in \Pb, |P| > 0} \Big\{\E\big[(\hv_{\Pb,oracle} -\abs{P})^2\big]\Big\}  < \infty\,.
	\end{EQA}
\end{lemma}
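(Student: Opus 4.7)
The plan is to reduce the claim to a moment bound on the missing volume and then invoke a classical stochastic geometry estimate. Since every polytope is convex and the class $\Cb_{\mathrm{conv}}$ satisfies Assumption~\ref{Assumption}, the estimator $\hv_{\Pb,oracle}=N_\partial/\lambda+|\Ch|$ is precisely the oracle estimator associated with the $\Cb_{\mathrm{conv}}$-wrapping hull (i.e.\ the ordinary convex hull). Therefore Theorem~\ref{UpBoundOracle}, applied with $\Ab=\Cb_{\mathrm{conv}}$, gives the identity
\begin{equation*}
\E\!\left[(\hv_{\Pb,oracle}-|P|)^{2}\right] \,=\, \frac{\E[|P\setminus\Ch|]}{\lambda}
\end{equation*}
for every $P\in\Pb$. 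It is thus enough to prove the uniform bound
\begin{equation*}
\sup_{P\in\Pb,\ |P|>0} \E\bigl[|P\setminus \Ch|\bigr] \;\lesssim\; \lambda^{-1}(\log\lambda)^{d-1}.
\end{equation*}

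Next, I would import the classical result of B\'ar\'any--Larman (cited in Section~\ref{ClassesOfBodies}) which states that in the uniform model with $n$ i.i.d.\ points on a polytope $P$, the convex hull $\Ch_{n}$ satisfies $\E[|P\setminus\Ch_{n}|]\lesssim n^{-1}(\log n)^{d-1}$, with a constant governed by the combinatorial complexity (the flag numbers) of $P$. To transfer this bound to the PPP model I would use Poissonisation exactly as in the proof of Proposition~\ref{EfronsProp}: writing $\E[|P\setminus\Ch|]=\sum_{k\ge 0}\E[|P\setminus\Ch_{k}|\mid N=k]\,\P(N=k)$, then splitting the sum at the bulk $k\asymp\lambda|P|$ and bounding the tail via Bennett's inequality, the dominant contribution yields $\E[|P\setminus\Ch|]\lesssim \lambda^{-1}(\log\lambda)^{d-1}$ with the desired rate. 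Combining this with the preceding display gives the claimed $\lambda^{-2}(\log\lambda)^{d-1}$ rate for the mean squared error.

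The only genuine subtlety, and the point I expect to require the most care, is the \emph{uniformity} of the bound over the parameter class $\Pb$. The B\'ar\'any--Larman constant depends on the combinatorial type of $P$, so a totally unrestricted supremum over all polytopes in the unit cube would diverge. Consequently I would interpret $\Pb$ as the class of polytopes with a uniformly bounded number of facets contained in $\Eb$, which is the natural reading given that the Lemma is advertised as an adaptivity statement analogous to Remark~3.3 of \cite{BaldinReiss16}. Under this restriction, the dependence of the constant on $P$ can be tracked through B\'ar\'any's flag decomposition and is majorised by a function of the facet count alone, yielding the required uniform bound and hence the finiteness of the $\limsup$.
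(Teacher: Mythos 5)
Your proposal is correct and follows essentially the same route the paper intends: reduce via the variance identity of Theorem~\ref{UpBoundOracle} (applied to the convex class, of which every polytope is a member) to the bound $\E[|P\setminus\Ch|]\lesssim\lambda^{-1}(\log\lambda)^{d-1}$, which is imported from the B\'ar\'any--Larman estimate together with Poissonisation; the paper itself gives no explicit proof and simply defers to Remark~3.3 of \cite{BaldinReiss16} and the cited stochastic-geometry results. The one substantive point you add --- that the B\'ar\'any--Larman constant scales with the flag (facet) complexity of $P$, so the supremum over an unrestricted class $\Pb$ would diverge and the class must be read as polytopes of uniformly bounded combinatorial complexity --- is a genuine gap in the paper's statement rather than in your argument, and your resolution of it is the right one.
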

We stress here, however, that the class polytopes \(\Pb\) does not satisfy Assumption~\ref{Assumption}. The framework 
applies to the class of convex sets \(\Cb\) and Lemma~\ref{PolyLemR} only allows to improve the rate for the subclass of \(\Cb\).
The class \(\Pb\) is stable only under finite intersections; taking arbitrary (possibly uncountable) intersections,
one can obtain an element not lying in the class.

\subsection{Polytopes with fixed directions of outer unit normal vectors}
\label{PolFixeDirSec}
The class of polytopes \(\Pb_{\Sb_k} \) with fixed directions \(\Sb_k = \{u_1,...,u_k\}\)
of outer unit normal vectors \(u_k \) belonging to the unit sphere \(\mathbb{S}^{d-1}\) provides another interesting example  of intersection stable sets. We assume the class is well-defined in the sense that there exists a polytope \(P_{\Sb_k}\) whose outer unit normal vectors  are  exactly \(\{u_1,...,u_k\}\). Without loss of generality we may assume \(\Eb =P_{\Sb_k} \).
The \(\Pb_{\Sb_k}\)-wrapping hull \(\Ph \) is a polytope with at most 
\(k\) facets and is given by %, the oracle estimator in \eqref{OracleEst} has the same form
\begin{EQA}[c]
	\Ph := \bigcap_{P  \in \Pb_{\Sb_k}: \{X_1,...,X_N\} \in P} P\,.
\end{EQA} 
The oracle estimator and the data-driven estimator are thus defined as
\begin{EQA}[c]
	\label{polytopes_fixed}
	\hv_{\Pb_{\Sb_k} ,oracle} := \frac{N_\partial}{\lambda} + |\Ph|\,,
	\quad 
	\hv_{\Pb_{\Sb_k} }  = \frac{N + 1}{N_\circ + 1} |\Ph|\,,
\end{EQA}
where the number of points lying on the boundary of the wrapping hull \(N_\partial\)
is equal to the number of facets of the wrapping hull and hence upper-bounded by \(k\).
According to the general scheme, the rate of convergence of the risk for the oracle estimator 
\(	\hv_{\Pb_{\Sb_k} ,oracle} \)
and the final estimator  \(	\hv_{\Pb_{\Sb_k} } \) rests upon a deviation inequality for the missing volume and is established in the following theorem which is proved in the Appendix.

\begin{theorem}
	\label{polytopesFixed}
	The worst case mean squared error of the  estimator  \(	\hv_{\Pb_{\Sb_k} } \) over the parameter class $\Pb_{\Sb_k} $ satisfies:
	\begin{EQA}[c]
		\sup_{ P \in \Pb_{\Sb_k}} \E\big[( \hv_{\Pb_{\Sb_k} } - |P|)^2 \big]
		\lesssim \frac{kW(\lambda/k) }{\lambda^2}\,,
	\end{EQA}
	whenever \(\lambda |P| \ge 1\), where \(W\) is the Lambert-W function that satisfies \(W(z\ex^{z}) = z\). The rate can further 
	be upper-bounded by \(k \log(\lambda / k )/\lambda^2\).
\end{theorem}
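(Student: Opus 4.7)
The plan is to combine the oracle inequality of Theorem~\ref{new_estimator_risk_wrap} with a cap-by-cap analysis of the missing volume that exploits the fixed directions of outer normals. Two structural facts set the stage. First, the wrapping hull $\Ph$ is itself a polytope from $\Pb_{\Sb_k}$, so it has at most $k$ facets; in general position each facet carries a single extreme sample point, yielding $N_{\partial}\le k$ almost surely and in particular $\E[N_{\partial}^p]\le k^p$ for every $p\ge 1$. Second, for each direction $u_i\in\Sb_k$ I set $t_i^{\star}=\sup_{x\in P}\langle x,u_i\rangle$, $\hat t_i=\max_{j\le N}\langle X_j,u_i\rangle$ and $V_i(t)=|\{x\in P:\langle x,u_i\rangle>t_i^{\star}-t\}|$; then $\Ph=P\cap\bigcap_i\{x:\langle x,u_i\rangle\le\hat t_i\}$ and the missing volume decomposes as $|P\setminus\Ph|\le\sum_{i=1}^{k}W_i$ with $W_i\eqdef V_i(t_i^{\star}-\hat t_i)$.

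The core probabilistic input is the identity
$$\P(W_i>s)=\P(\text{no sample point lies in a cap of volume }s)=e^{-\lambda s},\qquad s\in[0,|P|],$$
which identifies each $W_i$ as an $\mathrm{Exp}(\lambda)$ variable truncated at $|P|$. Hence $\E[W_i^q]\le q!\,\lambda^{-q}$, so $\E[|P\setminus\Ph|]\le k/\lambda$ and $\E[|P\setminus\Ph|^2]\le 2k^2/\lambda^2$ by Cauchy--Schwarz. Combined with Theorem~\ref{UpBoundOracle} this gives the oracle variance bound $\Var(\hv_{\Pb_{\Sb_k},oracle})\le k/\lambda^2$; together with the trivial inequality $\E[|P\setminus\Ph|]\le|P|$, the quantity $\alpha(\lambda,P)$ appearing in Theorem~\ref{new_estimator_risk_wrap} is reduced to a universal constant provided $\lambda|P|\ge 1$. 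A union bound over the $k$ caps together with their exponential marginal tails produces the deviation estimate $\P(|P\setminus\Ph|\ge|P|/2)\le k\,e^{-\lambda|P|/(2k)}$, which controls the remainder $r(\lambda,P)$ in the oracle inequality together with $\E[N_{\partial}^4]^{1/4}\le k$.

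Squaring the oracle inequality then yields a bound of the form $\E[(\hv_{\Pb_{\Sb_k}}-|P|)^2]\lesssim k/\lambda^2 + k^{5/2}\lambda^{-2}e^{-\lambda|P|/(8k)}$. The Lambert-$W$ factor emerges at the final balancing step: writing $\eta\eqdef\lambda|P|/k\ge 1/k$ and optimising the trade-off between the polynomial prefactor $k^{5/2}$ and the exponential $e^{-\eta/8}$ over the admissible range of $\eta$, one finds that the supremum of the resulting expression is governed by a transcendental equation whose reparametrisation in the variable $\lambda/k$ is precisely the Lambert-$W$ function. The concluding inequality $W(x)\le\log x$ for large $x$ and $W(x)\le x$ for small $x$ then delivers the weaker bound $k\log(\lambda/k)/\lambda^2$, which is meaningful in both regimes thanks to $\lambda|P|\ge 1$.

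The hardest step is this final balancing, in which a naive application of the oracle inequality leaves behind a spurious prefactor $k^{3/2}$ that would dominate when $\lambda|P|\ll k$. Removing it requires a careful accounting of the dependence between the caps $W_i$, which are coupled through the shared Poisson sample; I would combine their truncated-exponential marginal law with the Efron-type identity from Corollary~\ref{EfronsIdenPoisson}, $\E[N_{\partial}]=\lambda\,\E[|P\setminus\Ph|]$, to obtain a joint tail sharp enough to extract the $W(\lambda/k)$ factor rather than a larger polynomial in $k$.
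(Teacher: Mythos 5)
Your argument diverges from the paper's at the very first step: the paper does not decompose the missing volume cap by cap, but runs a covering-number argument, bounding the $\eps$-covering number of $\Pb_{\Sb_k}$ in the symmetric-difference metric by $(C/\eps)^{k}$ and obtaining a uniform deviation inequality $\P\bigl(|P\setminus\Ph|>x/\lambda+2c\eps\bigr)\le \ex^{-x}$ for the $\eps$ solving $H(\Pb_{\Sb_k},\rho_1,\eps)=c\lambda\eps$. The Lambert-$W$ factor is produced exactly there: writing $H(\eps)\asymp k\log(C/\eps)$ and solving $k\log(C/\eps)=c\lambda\eps$ gives $\eps\asymp \frac{k}{\lambda}W(\lambda/k)$, whence $\E[|P\setminus\Ph|]\lesssim\eps$ and the rate $\eps/\lambda$. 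Your per-direction analysis, by contrast, is a legitimate and in fact sharper route to the first moment: the identity $\P(W_i>s)=\ex^{-\lambda s}$ is correct (the cap of volume $s$ below the top support value in direction $u_i$ is empty exactly when $W_i>s$), so $\E[|P\setminus\Ph|]\le k/\lambda$ and the oracle variance is at most $k/\lambda^2$, with no $W(\lambda/k)$ at all. That part of your proposal I would accept as a genuinely different, more elementary argument for the leading term.

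The gap is in the remainder. Everything hinges on controlling $r(\lambda,P)=c_1\lambda^{-1}\E[N_\partial^4]^{1/4}\P(|P\setminus\Ph|\ge|P|/2)^{1/4}$ uniformly over $\{P:\lambda|P|\ge1\}$, and your union bound $\P(|P\setminus\Ph|\ge|P|/2)\le k\,\ex^{-\lambda|P|/(2k)}$ is vacuous precisely in the worst case $\lambda|P|\asymp 1$ with $k$ large, leaving a contribution of order $k^{5/2}/\lambda^{2}$ in the squared risk. You acknowledge this, but the proposed repair --- ``a careful accounting of the dependence between the caps'' combined with Corollary~\ref{EfronsIdenPoisson} --- is not carried out, and it is not clear it can be: the Efron identity $\E[N_\partial]=\lambda\E[|P\setminus\Ph|]$ is a first-moment statement and gives no joint tail for the coupled variables $W_1,\dots,W_k$. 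Moreover, your account of where $W(\lambda/k)$ comes from (``optimising the trade-off between the polynomial prefactor $k^{5/2}$ and the exponential'') is not substantiated and does not match any computation you present: the supremum of $k/\lambda^2+k^{5/2}\lambda^{-2}\ex^{-\lambda|P|/(8k)}$ over $\lambda|P|\ge1$ is attained at $\lambda|P|=1$ and is of order $k^{5/2}/\lambda^2$, with no transcendental balancing appearing. As written, the proposal establishes the oracle bound but not the stated theorem for the data-driven estimator $\hv_{\Pb_{\Sb_k}}$; to close it you would need either a genuine multivariate deviation inequality for $|P\setminus\Ph|$ (which is what the paper's entropy argument supplies, at the price of the $W(\lambda/k)$ factor) or a different treatment of the small-volume regime.
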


\section{Illustrative simulations}
\label{SimulationsSection}
In this section, we  illustrate the performance of the proposed estimators for a class of \(r\)-convex sets.
Our primal example of an \(r\)-convex set for simulations is the annulus \(C_{r^\star} = B(0.5,0.5)\setminus B(0.5,0.25)\). Thus clearly \(C_{r^\star} \in \Cb_r \) for all \(0 < r \le 0.25\). Figure~\ref{r-convex_alpha_varies} depicts the \(r\)-convex hull estimator \eqref{r_convex_hull_estimator} for \(r = 0.01, 0.04,0.2,1\) based on the observations of the PPP with \( \lambda  = 300\).
An important observation is that once the value of \(r\) is larger than the true radius \(r^\star\)
of an \(r\)-convex set, the \(r\)-convex hull essentially misses the ``holes'' of radius \(r^\star\). One should bear this in mind
when using large values of \(r\)  for constructing the oracle estimator when the number of observation points is small. This subtle issue is depicted in Figure~\ref{r_convex_fixe_alpha}, where 
the root mean squared error of the oracle estimator \(\hv_{r,oracle}\) for the volume  (black line), based on the \(r\)-convex hull with \(r = 0.04\), converges to the area of the ``hole'' of size \(\pi (0.25)^2 \approx  0.196\). Another striking point is that when the number of observations is small, the 
\(r\)-convex hull with a small value of \(r\) essentially coincides with the points themselves and thus the RMSE of the oracle estimator 
\(\hv_{r,oracle}\) coincides with the RMSE of \(\hv_{\Kb,oracle}\) and equals \(|C_{r^\star}|/\lambda \) (red line in Figure~\ref{r_convex_fixe_alpha})! Finally we depict RMSE estimates for
the oracle estimator \(\hv_{r,oracle}\) for different \(r\) in Figure~\ref{r_convex_n_varies_alpha_fixed}. One can clearly see the regions of decreasing 
value of the RMSE, the fairly flat value of RMSE and the jump when \(r\) becomes larger than the true parameter \(r^\star\).
Table~\ref{table1} further collects the Monte Carlo estimates  of the number of points  \(N_\circ\) lying inside the wrapping hull,
the number of points \(N_\partial\)  on the boundary of the wrapping hull and the number of 
isolated points \(N_{iso}\) of the boundary of the wrapping hull. For analyzing the performance 
of the  
adaptive estimator proposed in Section~\ref{RConnSect}, we break  the interval \([0.06, 0.5]\) into pieces of length \(0.02\), 
compute the estimates of the radius \(\hat{r}\) from \eqref{adaptR} and the estimates of the RMSE of \(\hv_{\hat{r}}\) 
based on 200 Monte Carlo iterations in Table~\ref{table2}.

\begin{figure}[tp]
	\centering
	\includegraphics[width=0.8\linewidth]{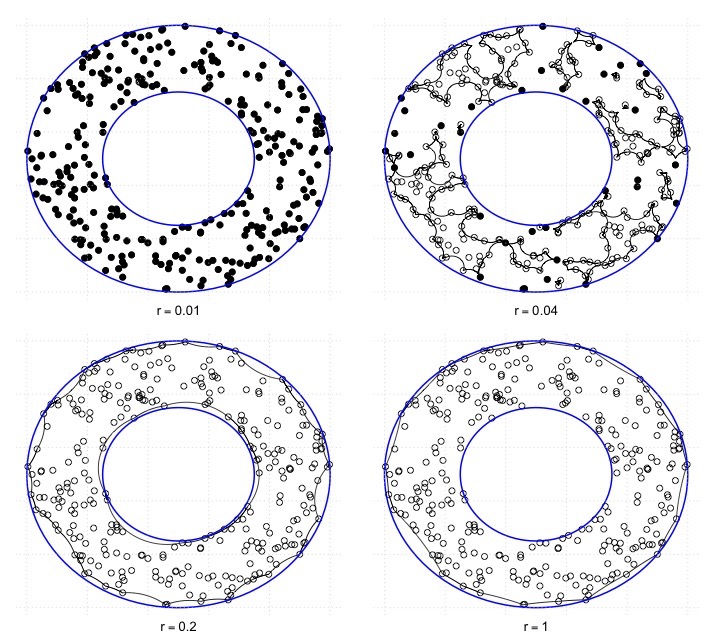}\\[-5mm]
	\caption[]{The four weakly-convex set with \(r^\star = 0.25\) (blue), observations of the PPP with \(\lambda = 300\) (points)  and  their \(r\)-convex hulls for different values of \(r\) (black).}
	\label{r-convex_alpha_varies}
\end{figure} 

\begin{figure}[tp]
	\centering
	\includegraphics[width=0.7\linewidth]{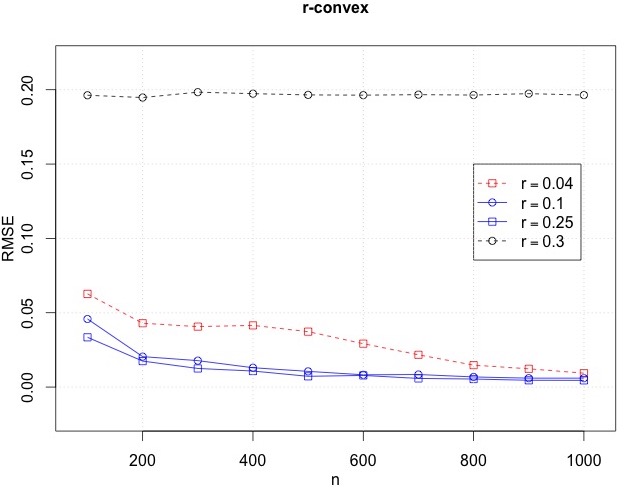}\\[-5mm]
	\caption[]{Monte Carlo RMSE estimates for the oracle estimator for the volume of the annulus \(B(0.5,0.5)\setminus B(0.5,0.25)\) with respect to the sample size.}
	\label{r_convex_fixe_alpha}
\end{figure} 

\begin{figure}[tp]
	\centering
	\includegraphics[width=0.7\linewidth]{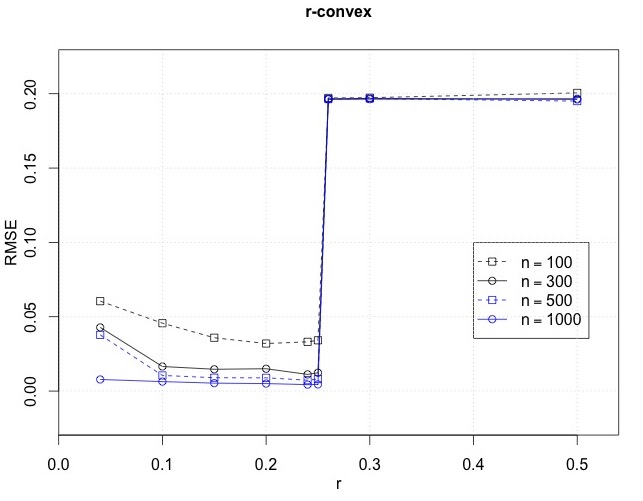}\\[-5mm]
	\caption[]{Monte Carlo RMSE estimates for the oracle estimator for the volume of the annulus \(B(0.5,0.5)\setminus B(0.5,0.25)\) with respect to \(r\).}
	\label{r_convex_n_varies_alpha_fixed}
\end{figure} 

\begin{table}[tp]
	\footnotesize
	\resizebox{\linewidth}{!}{%
		\begin{tabular}{c  c  c  c  c c c  }%\\\vspace{10pt}
			\multicolumn{7}{c}{\(r = 0.04\) }\\
			\(n = \lambda / |A|\)  & \(N_\circ\)& \(N_\partial\) & \(N_{iso}\) & \( RMSE(\hv_{r,oracle})\) & \( RMSE(\hv_{r})\) & 
			\( \frac{RMSE(\hv_{r}) }{RMSE(\hv_{r,oracle})} \)   \\
			50 & 0.13 & 49.8 & 44 &0.087 &0.55 &  6.31 \\
			100 & 1.5 &  98.37& 66 &0.059& 0.33 & 4.65 \\
			200 & 23.6 &  175.8& 58 & 0.042& 0.15 & 3.7 \\
			300 & 90.4 &  211 & 33 &0.039 &0.109 & 2.82 \\
			400 & 191.6 & 210 & 17 & 0.043 &0.085 &  1.97\\
	\end{tabular}}\vspace{10pt}
	\resizebox{\linewidth}{!}{
		\begin{tabular}{c  c  c  c  c c c }
			\multicolumn{7}{c}{\(r = 0.1\) }\\
			\(n = \lambda / |A|\)  & \(N_\circ\)& \(N_\partial\) & \(N_{iso}\) & \( RMSE(\hv_{r,oracle})\) & \( RMSE(\hv_{r})\) & 
			\(  \frac{RMSE(\hv_{r}) }{RMSE(\hv_{r,oracle})} \)  \\
			50 & 8.5 & 42.16 &8.52   & 0.071 &0.138 & 1.94\\ 
			100 & 45.9 & 53.34 & 1.37 & 0.043 &0.064&1.48  \\
			200 &  138.2 & 60.95 &0.03  &0.021  &0.027& 1.26 \\
			300 &  233.4 & 68.08 &0  &  0.015&  0.018&1.20\\
			400 & 326.1 &  74.40& 0& 0.013 & 0.015 & 1.15\\
	\end{tabular}}\vspace{10pt}
	\resizebox{\linewidth}{!}{
		\begin{tabular}{c  c  c  c  c c c  }
			\multicolumn{7}{c}{\(r = 0.25\) }\\
			\(n = \lambda / |A|\)  & \(N_\circ\)& \(N_\partial\) & \(N_{iso}\) & \( RMSE(\hv_{r,oracle})\) & \( RMSE(\hv_{r})\) & 
			\(  \frac{RMSE(\hv_{r}) }{RMSE(\hv_{r,oracle})} \)   \\
			50 & 24.75 & 24.21 & 0.06 & 0.061 &0.085 &1.39 \\
			100 &68.58  & 29.60 & 0 &0.033  & 0.0405& 1.20\\
			200 & 163.75 &36.03  &  0&0.018  & 0.019 &1.04\\
			300 & 261.44 & 40.68 &  0&0.0108  & 0.0124 &1.13\\
			400 & 357.41 &  44.17&  0&  0.0096 &  0.0104 &1.076\\
	\end{tabular}}\vspace{10pt}
	\resizebox{\linewidth}{!}{
		\begin{tabular}{c  c  c  c  c c c  }
			\multicolumn{7}{c}{\(r = 0.3\) }\\
			\(n = \lambda / |A|\)  & \(N_\circ\)& \(N_\partial\) & \(N_{iso}\) & \( RMSE(\hv_{r,oracle})\) & \( RMSE(\hv_{r})\) & 
			\( \frac{RMSE(\hv_{r}) }{RMSE(\hv_{r,oracle})}  \)  \\
			50 &  30.71 &18.70  &  0& 0.208 &0.340  &1.628\\
			100 &77.59  &23.26  & 0 & 0.2002 & 0.258&1.29 \\
			200 &  170.30& 29.39 & 0 & 0.1982 &0.232 &1.17 \\
			300 & 265.17 & 33.89 & 0 & 0.1978 & 0.223& 1.13 \\
			400 &362.43  &  37.89& 0 &0.1987 &0.219 &  1.10\\
	\end{tabular}}
	
	\caption{Monte Carlo RMSE estimates for the oracle estimator \(\hv_{r,oracle}\) and for the fully data-driven
		estimator \(\hv_r\) for the volume of the annulus \(A = B(0.5,0.5)\setminus B(0.5,0.25)\) 
		with respect to \(r\) and \(n = \lambda |A|\), the number of points lying inside the wrapping hull \(N_\circ\),
		the number of points on the boundary of the wrapping hull \(N_\partial\) and the number of 
		isolated points of the boundary of the wrapping hull \(N_{iso}\).}
	\label{table1}
\end{table}

\begin{table}[tp]
	\footnotesize
	%\resizebox{\linewidth}{!}{%
	\begin{center}
		\begin{tabular}{c  c  c   }%\\\vspace{10pt}
			\(n = \lambda / |A|\)  & \(\hat{r} \) & \( {RMSE(\hv_{\hat{r}})}  \) \\
			50 & 0.088 & 0.36  \\
			100 & 0.085 &  0.160 \\
			200 & 0.084 &  0.069 \\
			300 & 0.105 &  0.033  \\
			400 & 0.125 & 0.0182 \\
			500 & 0.149 & 0.0123 \\
			1000 & 0.165 & 0.0056 \\
		\end{tabular}
	\end{center}
	\caption{Monte Carlo RMSE estimates for the adaptive estimator \(\hv_{\hat{r}}\) 
		for the volume of the annulus \(A = B(0.5,0.5)\setminus B(0.5,0.25)\) 
		with respect to \(n = \lambda |A|\).}
	\label{table2}
\end{table}

\subsection{Appendix}
\subsubsection{Proof of Theorem~\ref{SuffComplThm}}
Sufficiency follows from the Neyman factorisation criterion applied to the likelihood function \eqref{fPPeb},
while completeness follows by definition provided that we show 
\begin{EQA}[c]
	\forall A \in \Ab: \E_A\bigl[T(\Ah)\bigr] = 0 \implies  T(\Ah) = 0 \;\;\; \P_{\mathbf E}-a.s.
\end{EQA}
for any \(\Ac\)-measurable function \(T: \Ab \to \R\).
From the likelihood  in \eqref{fPPeb} for $\lambda=\lambda_0$, we derive
\begin{EQA}
	\E_A\bigl[T(\Ah)\bigr] & = &\E_{\mathbf E}\Bigl[T(\Ah) \exp\bigl(\lambda\abs{{\mathbf E}\setminus A}\bigl){\bf 1}(\Ah \subseteq A )\Bigr] \,.
\end{EQA}
Since \(\exp(\lambda\abs{{\mathbf E}\setminus A})\) is deterministic,  we have \(\forall A \in \Ab\)% \(\E_A\bigl[T(\Ah)\bigr] = 0\) for all $A\in{\Ab}$ implies
\begin{EQA}[c]
	\E_A\bigl[T(\Ah)\bigr] = 0 \implies  
	\E_{\mathbf E}\bigl[T(\Ah){\bf 1}(\Ah \subseteq A) \bigr] = 0\,. \,
\end{EQA}
%For \(C \in \C \), define the family of convex subsets of \(C\) as \([C] = \{A\in \C |  A \subseteq C \}\) such that $\hat C\subseteq C \iff \hat C\in[C]$. 
Splitting \(T = T^{+} - T^{-}\)
with non-negative \(\Ac\)-measurable functions \(T^{+}\) and \(T^{-}\), we infer that the measures
\(\mu^\pm(B)=\E_{\mathbf E}[T^{\pm}(\Ah){\bf 1}(\Ah \in B)] \), $B\in \Ac$, agree on \(\{[B] \,|\, B \in \Ab \}\), where 
\([B] = \{A\in \Ab |  A \subseteq B \}\). Since the brackets \(\{[B] \,|\, B \in \Ab \}\) generate the \(\sigma\)-algebra \(\Ac\) the measures 
\(\mu^\pm(B)\) agree on all sets in \(\Ac\), in particular on  \(\{T > 0\} \) and \(\{T < 0\} \), which entails $\E_{\mathbf E}[T^+(\Ah)]=\E_{\mathbf E}[T^-(\Ah)]=0$.
Thus, \(T(\Ah) = 0\) holds  \(\P_{\mathbf E}\)-a.s.

\subsubsection{Proof of Theorem~\ref{polytopesFixed}}
Let us denote by \(\rho_1(A,B) = |A\triangle B|\)  the symmetric distance between 
two compact subsets \(A\) and \(B\) of the compact convex set \(\Eb\) in \(\R^d\). 
Recall that an \emph{\(\eps\)-net} of the class \(\Pb_{\Sb_k}\) with respect to the metric \(\rho_1\) is a collection \(\{P^1,...,P^{N_\eps} \} \in \Pb_{\Sb_k }\)
such that for each \(P \in \Pb_{\Sb_k}\), there exists \(i \in \{1,...,N_\eps\} \) such that \(\rho_1(P,P^i) \le \eps\). The \emph{\(\eps\)-covering 
	number} \(N(\Pb_{\Sb_k}, \rho_1, \eps)\) is the cardinality of the smallest \(\eps\)-net. The \emph{\(\eps\)-entropy} of the class \(\Pb_{\Sb_k}\) is 
defined by \(H(\Pb_{\Sb_k}, \rho_1, \eps) = \log_2 N(\Pb_{\Sb_k}, \rho_1, \eps)\). Furthermore, it follows by dilation of a set that for  \(\Ph \in \Pb_{\Sb_k}\) there exists \(\hat{m} \in \{1,...,N_\eps\} \) such that   \(\Ph \subseteq P^{\hat{m}} \subseteq P  \) and \(\rho_1(\hat{P}, P^{\hat{m}}) \le c \eps\) for some universal constant \(c > 1\) and \( \eps\) small enough. We thus obtain for all \(P \in \Pb_{\Sb_k}\) and \(x > 0\),
\begin{EQA}
	\P\big(|P \setminus \Ph| > x/\lambda + 2 c \eps \big) & \le& 	\P\big(|P \setminus P^{\hat{m}} | > x/\lambda + c \eps \big) 
	\\
	&\le&  \sum_{m: |P \setminus P^{m} | > x/\lambda +  c \eps } 
	\P\big(\Nc (P \setminus P^{m}) = 0 \big)  \\
	&\le& \exp\big( -x - c\lambda \eps + H(\Pb_{\Sb_k}, \rho_1, \eps)  \big) = \ex^{-x}\,,
\end{EQA}
plugging in \(\eps\)
that solves \(H(\Pb_{\Sb_k}, \rho_1, \eps) = c \lambda \eps \). The \(\eps\)-covering 
number \(N(\Pb_{\Sb_k}, \rho_1, \eps)\)  of the class \(\Pb_{\Sb_k}\) can  be bounded by \((C/\eps)^{k}\) for some universal constant \(C > 1\). As a result, 
the asymptotic rate follows using  Fubini's theorem combined with Theorem~\ref{UpBoundOracle} and Theorem~\ref{new_estimator_risk_wrap}.

\section*{Acknowledgements}
The author is  grateful to Markus Rei{\ss} at 
Humboldt-Universit{\"a}t zu Berlin for hosting him in 
January 2016 and  valuable discussions which originated this research. The author further thanks Quentin Berthet for helpful comments.

%Using the continuity properties one can show that \(\Ah\) is a random element in the Polish space \((\Ab, d_H)\),
%see \cite{davis1987convex} for a detailed study of the continuity of the convex hull of the PPP points. 
%\section{Numerical analysis}
%
%\section{Theoretical analysis}

%\begin{assumption} We assume the underlying class \(\C\) of sets fulfils the following properties 
%\begin{enumerate}
%\item for any \(A_1,...,A_n \in \C\) it follows \(\cap_{k} A_k \in \C\)
%\item simple sets
%\end{enumerate}
%\end{assumption}
%It can be easily seen that the sets from satisfy this assumption. Furthermore,
%
%
%the sets will be specified later. The wrapping hull can be seen as a generalisation of the convex hull.  Thus the total number of points can be split into the points \(N_{h}\) lying on the wrapping hull 
%and \(N_\circ\) lying within the hull. 
%
%It can be shown that the estimator \(\Ah\) is the MLE estimator. 

%% The Appendices part is started with the command \appendix;
%% appendix sections are then done as normal sections
%% \appendix

%% \section{}
%% \label{}

%% If you have bibdatabase file and want bibtex to generate the
%% bibitems, please use
%%
%%  \bibliographystyle{elsarticle-harv}
%%  \bibliography{<your bibdatabase>}

%% else use the following coding to input the bibitems directly in the
%% TeX file.

%\begin{thebibliography}{00}

%% \bibitem[Author(year)]{label}
%% Text of bibliographic item

\bibliographystyle{economet}

%\section*{\refname}

%[\MR: url for BRUNEL and Kutoyants without New York]

\bibliography{ref}

%\end{thebibliography}
\end{document}